\DeclareMathAlphabet{\mathpzc}{OT1}{pzc}{m}{it}
\renewcommand*{\backref}[1]{}
\renewcommand*{\backrefalt}[4]{%
	\ifcase #1 (Not cited.)%
	\or        (Cited on page~#2.)%
	\else      (Cited on pages~#2.)%
	\fi}
\newtheorem{theorem}{Theorem}[section]
\newtheorem*{theorem*}{Theorem}
\newtheorem{lemma}[theorem]{Lemma}
\newtheorem{proposition}[theorem]{Proposition}
\newtheorem{definition}[theorem]{Definition}
\newtheorem{question}[theorem]{Question}
\theoremstyle{remark}
\newtheorem{remark}[theorem]{Remark}
\newtheorem{example}[theorem]{Example}
\newcommand{\bF}{\mathbb{F}}
\newcommand{\bQ}{\mathbb{Q}}
\newcommand{\bP}{\mathbb{P}}
\newcommand{\bR}{\mathbb{R}}
\newcommand{\bZ}{\mathbb{Z}}
\newcommand{\cO}{\mathcal{O}}
\newcommand{\Exc}{{\rm Exc}}
\newcommand{\NE}{{\overline{\rm NE}}}
\newcommand{\Proj}{{\bf Proj}}
\newcommand{\Sing}{{\rm Sing}}
\newcommand{\Supp}{{\rm Supp}}
\newcommand{\Vol}{{\rm Vol}}
\def\<{\langle}
\def\>{\rangle}
\def\ni{\noindent}
\def\ra{\rightarrow}
\def\dra{\dashrightarrow}
\address[]{Department of Mathematics, National Cheng Kung University, Tainan 70101, Taiwan
}
\email{cjlai72@mail.ncku.edu.tw}
\begin{document}
	\title[Anticanonical volumes of weak $\bQ$-Fano threefolds]{On anticanonical volumes of weak $\bQ$-Fano terminal threefolds of Picard rank two}
	\author{Ching-Jui\ Lai}
	\subjclass[2010]{14J30, 14J45, 14E30 .}
	\keywords{Fano varieties, terminal threefolds, minimal model program.}
	\begin{abstract} We show that for a weak $\bQ$-Fano threefold $X$ of Picard rank two ($\bQ$-factorial with at worst terminal singularities), the anticanonical volume satisfies $-K_X^3\leq 72$ except in one case, and the equality holds only if $X=\bP_{\bP^2}(\cO_{\bP^2}\oplus\cO_{\bP^2}(3))$. The approach in this article can be served as a general strategy to establish the optimal upper bound of $-K_X^3$ for any canonical Fano threefolds, where the described main result serves as the first step.
\end{abstract}
	\maketitle
\section{Introduction} Through this article, we work over an algebraically closed field of characteristic zero. A Fano variety is a normal projective variety with an ample anticanonical divisor. From the minimal model conjecture, Fano varieties are building blocks for varieties with negative Kodaira dimension. We recall two main classes of singularities arose from the minimal model program.  

\begin{definition} A normal variety $X$ has canonical (resp. terminal) singularities if $K_X$ is $\bQ$-Cartier and for any proper birational morphism $\mu:\tilde{X}\ra X$ from a normal variety $\tilde{X}$, where one can write $K_{\tilde{X}}\equiv\mu^*K_X+\sum a_iE_i$ and $E_i$ are $\mu$-exceptional divisors, the discrepancies $a_i\geq0$ (resp. $a_i>0$) for all $i$.
\end{definition}

The two main classes of varieties considered in this work are the class of canonical Fano varieties and the class of weak $\bQ$-Fano varieties. Varieties in the latter class have weaker positivity but are less singular.
\begin{definition} A canonical (weak) Fano variety is a normal projective variety $X$ with ample (resp. nef and big) $\bQ$-Cartier anticanonical divisor $-K_X$ and at worst canonical singularities. 

A weak $\bQ$-Fano variety is a $\bQ$-factorial normal projective variety $X$ with nef and big anticanonical divisor $-K_X$ and at worst terminal singularities. 
\end{definition}

From \cite{HM, BCHM}, one can always take a terminalization and then a $\bQ$-factorization of a canonical Fano variety $X'$, from which we then obtain a weak $\bQ$-Fano variety $X$ with a birational morphism $\pi:X\ra X'$ such that $\pi^*K_{X'}=K_X$. In this case, $X'=\Proj(\oplus_{m\geq0} H^0(X,\cO_X(-mK_X))$ is the \emph{anticanonical model} of $X$. In particular, the anticanonical volume remains unchanged: $\Vol(-K_{X'})=-K_{X'}^{\dim X'}=-K_X^{\dim X}=\Vol(-K_X)$. From recent results \cite{Bir1, Bir2} on the general boundedness of Fano varieties, known as  Borisov-Alexeev-Borisov conjecture, the anticanonical volume for canonical Fano varieties of a fixed dimension is bounded both from below and above. It is then very natural and interesting to ask what the optimal bounds are. From the above discussion, it is enough to work in the class of weak $\bQ$-Fano varieties.

We focus on $\dim X=3$ in this article, where the explicit geometry of terminal threefolds has been intensely studied for decades. It was established much earlier from \cite{KMMT} the existence of lower and upper bounds of $-K_X^3$: For any weak $\bQ$-Fano threefold $X$, one has 
$$\frac{1}{(24!)^3}\leq-K_X^3\leq 6^3\cdot(24!)^2.$$ 
However, these inequalities are far from optimal. If $X$ is smooth, it is known that $-K_X^3\leq 64$ from the classification of smooth Fano threefolds. 
\begin{question} Find the optimal lower and upper bound of anticanonical volumes for weak $\bQ$-Fano threefolds.
\end{question}
The problem becomes much harder due to the appearance of singularities and weakened positivity of $-K_X$ on a terminalization. Surprisingly, combined with Reid's Riemann-Roch formula for terminal threefolds, the optimal lower bound is established in \cite{CC} by their method of ``unpacking the baskets.''
\begin{theorem}[\cite{CC}]\label{lb} For a weak $\bQ$-Fano threefold $X$, the anticanonical volume satisfies $-K_X^3\geq1/330$. This bound is optimal for weighted hypersurfaces $X_{66}\subseteq\bP(1,5,6,22,33)$, cf. \cite{IF}.
\end{theorem}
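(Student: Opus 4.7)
The plan is to convert the geometric inequality $-K_X^3\geq 1/330$ into a combinatorial one on the basket of terminal cyclic-quotient singularities of $X$, and to minimize the resulting expression. First I would invoke Kawamata--Viehweg vanishing, which, since $-K_X$ is nef and big and the singularities of $X$ are terminal, gives $H^i(X,\cO_X(-mK_X))=0$ for all $i>0$ and $m\geq0$; in particular $\chi(\cO_X)=1$ and $P_m:=h^0(X,-mK_X)=\chi(\cO_X(-mK_X))$ for $m\geq 1$. Reid's Riemann--Roch formula for terminal threefolds then expresses $P_m$ as a $\bQ$-linear combination of $-K_X^3$, $-K_X\cdot c_2(X)$, and local contributions $\ell(Q_j,m)$ indexed by a virtual basket $B=\{(b_j,r_j)\}$ of cyclic quotients $\tfrac{1}{r_j}(1,-1,b_j)$. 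Applying RR at $m=0$ (Noether's formula) yields the identity $-K_X\cdot c_2(X)=24-\sum_j(r_j-1/r_j)$, which I would use to eliminate $c_2$ entirely and obtain, for each $m$, an explicit equation relating $P_m$, $-K_X^3$ and the basket $B$.

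Solving each such equation for $-K_X^3$ and using the trivial non-negativity $P_m\geq 0$ (together with monotonicity of plurigenera and a section-counting bound on the anticanonical model for small $m$) produces a collection of inequalities of the shape $-K_X^3\geq \Phi_m(B)$, where $\Phi_m$ is an explicit rational function of $B$. The theorem is then reduced to the combinatorial statement
$$\max_m\Phi_m(B)\geq\tfrac{1}{330}$$
for every basket $B$ compatible with a weak $\bQ$-Fano threefold.

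The main obstacle is precisely this combinatorial minimization, since a priori neither $|B|$ nor the indices $r_j$ are bounded. I would follow the \emph{unpacking the baskets} device of \cite{CC}: any composite singularity in $B$ can be formally refined into ``prime packets'' whose combined contribution to every Reid term is preserved but whose $\Phi_m$ values are non-increased, so that the extremal baskets must lie in this normalized class. After this normalization, the admissible baskets satisfying $-K_X^3>0$ form a finite, tabulatable list on which one directly checks that the minimum of $\max_m\Phi_m(B)$ equals $1/330$, attained by the basket of $X_{66}\subseteq\bP(1,5,6,22,33)$. Sharpness itself is a direct computation: $X_{66}$ is a quasi-smooth terminal weak $\bQ$-Fano with $-K_X=\cO_X(1)$ by adjunction (since $66-(1+5+6+22+33)=-1$), and
$$-K_X^3=\frac{66}{1\cdot 5\cdot 6\cdot 22\cdot 33}=\frac{1}{330}.$$
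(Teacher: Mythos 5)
The paper offers no proof of this statement beyond citing \cite{CC}, and your outline is a faithful reconstruction of that reference's argument: Kawamata--Viehweg vanishing plus Reid's Riemann--Roch reduce the bound to a combinatorial minimization over baskets of terminal cyclic quotients, the ``unpacking'' device renders the minimization finite, and your sharpness computation for $X_{66}\subseteq\bP(1,5,6,22,33)$ is correct. The one ingredient you leave implicit is that finiteness of the candidate baskets ultimately rests on $-K_X\cdot c_2(X)\geq 0$ (equivalently $\sum_j(r_j-1/r_j)\leq 24$, via the very identity you use to eliminate $c_2$), which is supplied by \cite{KMMT}.
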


However, finding the optimal upper bound is more delicate. Early results require the restriction to Gorenstein singularities or having the ampleness condition of $-K_X$.
\begin{theorem}\label{ub} Let $X$ be a weak $\bQ$-Fano threefold ($\bQ$-factorial with at worst terminal singularities). 
	\begin{enumerate}[$(i)$]
		\item If $X$ is Gorenstein and Fano, then $X$ is smoothable and $-K_X^3\leq 64,$ cf. \cite{Nam}. 
		\item If $X$ is Gorenstein weak $\bQ$-Fano, then $-K_X^3\leq 72$. Moreover, $-K_X^3=72$ if and only if the anticanonical model of $X$ is $\bP(1,1,1,3)$ or $\bP(1,1,4,6)$, cf. \cite{Pr1}.
		\item If $X$ is not Gorenstein but $\rho(X)=1$, then $-K_X^3\leq 125/2$. Moreover, $-K_X^3=125/2$ if and only if $X\cong\bP(1,1,1,2)$, cf. \cite{Pr2}.
	\end{enumerate}
\end{theorem}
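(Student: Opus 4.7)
The statement is a compilation of three results attributed to \cite{Nam, Pr1, Pr2}; the ``proof'' in the paper will almost certainly be a set of citations rather than a reproduction. Nevertheless, here is how I would approach each part independently, should I have to redo them from scratch.

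For part (i), the key input is Namikawa's smoothability theorem: every Gorenstein Fano threefold with terminal singularities arises as the special fiber of a flat family whose general member is a smooth Fano threefold. Since $-K^3$ is constant in a flat family, the bound reduces to the smooth case, where the Iskovskikh--Mori--Mukai classification yields $-K^3\leq 64$, with equality only for $\bP^3$. The nontrivial content is thus Namikawa's deformation argument, not the numerics.

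For part (ii), the plan is to pass to the anticanonical model $X'$, a Gorenstein canonical Fano threefold. Using the integer-coefficient Riemann--Roch together with Kawamata--Viehweg vanishing, one first estimates the anticanonical genus $g(X')=h^0(-K_{X'})-2$. One then analyses the rational map $\varphi_{|-K_{X'}|}$: if it is birational onto its image, standard projective-geometry inequalities bound the degree of the image and give $-K_{X'}^3\leq 64$, as in the smooth case; otherwise $\varphi_{|-K_{X'}|}$ contracts $X'$ to a surface or a curve, and $X'$ acquires a conic- or line-bundle (resp. generalised cone) structure whose extremal-ray geometry forces $X'$ to be a weighted projective cone. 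Singling out $\bP(1,1,1,3)$ and $\bP(1,1,4,6)$ as the only equality cases is the principal difficulty, and requires a careful classification of base loci and Gorenstein canonical singularities of $X'$.

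For part (iii), with $\rho(X)=1$ and non-Gorenstein singularities, the natural tool is Reid's orbifold Riemann--Roch, which expresses $\chi(\cO_X(-mK_X))$ as a polynomial in $m$ whose coefficients mix $-K_X^3$ with contributions from a basket $\{\frac{1}{r_i}(1,a_i,-a_i)\}$ of fictitious cyclic singularities. Combined with Kawamata--Viehweg vanishing and the nonnegativity of $h^0$, these identities yield numerical inequalities; the ``unpacking the basket'' strategy from \cite{CC} used for the sharp lower bound in Theorem~\ref{lb} can be reversed in spirit to control $-K_X^3$ from above. The Picard rank one hypothesis is decisive: it forces $-K_X$ to generate the class group up to torsion and fixes the possible Weil divisor classes, so that a careful study of $|-K_X|$ and its base scheme should isolate $\bP(1,1,1,2)$ as the unique extremal. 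As in part (ii), the main obstacle is not the numerical bound itself but the equality characterization, where one must rule out all competing weighted hypersurfaces by explicit birational geometry.
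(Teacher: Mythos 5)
The paper offers no proof of this theorem: it is a survey statement quoted with inline citations to \cite{Nam}, \cite{Pr1}, and \cite{Pr2}, so your recognition that the ``proof'' is a set of citations is exactly right, and your sketches of the underlying arguments are consistent with those sources. The only caveat is that for part (iii) the paper's introduction indicates that \cite{Pr2} actually proceeds via Suzuki's index bound \cite{Su} and Kawamata's computation on Chern classes \cite{Ka2} (together with Reid's Riemann--Roch), rather than by reversing the basket-unpacking of \cite{CC}, but this does not affect the correctness of your assessment.
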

The upper bound $-K_X^3\leq 72$ is conjectured for Gorenstein case by Iskovskih and proved in \cite{Pr1}. However, find the optimal upper bound for weak $\bQ$-Fano threefolds of Picard rank $\rho(X)\geq2$ is only partially solved. In the Gorenstein case, one can study the geometry of pluri-anticanonical systems, where the computation remains the same as for smooth cases, cf. \cite{Isk}. However, the study of the pluri-anticanonical systems in general is more complicated and only gets its attention recently, cf. \cite{CJ}. For $\bQ$-Gorenstien of Picard rank one case, the work \cite{Pr2} utilizes the index bound of Suzuki \cite{Su} and Kawamata's computation on Chern classes \cite{Ka2}, where the condition $\rho(X)=1$ is indispensable. Of course, in both cases, we use Reid's Riemann-Roch for terminal threefolds, and by far this is almost the only applicable tool for the remaining unsolved cases. 

Without Gorenstein condition, we restrict to $\rho(X)=2$ in this article and show that 72 is the optimal upper bound of anticanonical volume except in one case. Roughly speaking, by running two-ray games associated with the two extremal rays of $X$, we end up in two cases:
\begin{enumerate}
	\item[({\bf I})] either there exist Mori fiber spaces $X\stackrel{\chi_l}{\dashrightarrow}X_l\rightarrow Z_l$ and $X\stackrel{\chi_r}{\dashrightarrow}X_r\rightarrow Z_r$, or 
	\item[({\bf II})] $X_l$ admits a Mori fiber space $X\stackrel{\chi_l}{\dashrightarrow}X_l\rightarrow Z_l$ and a $K$-trivial divisorial contraction $\varphi_r:X\rightarrow Y_r$ with the exceptional divisor $E$.
\end{enumerate} 
See Section \ref{pre} for the notation and construction.
\theoremstyle{theorem}
\newtheorem*{main}{Main Theorem}
\begin{main}\label{main} For $X$ a weak $\bQ$-Fano threefold of $\rho(X)=2$, there are two cases:
\begin{enumerate}
	\item[$({\bf I})$] If both two-ray games end in Mori fibre spaces, then $-K_X^3\leq 54$.
	\item[$({\bf II})$] If a $K$-trivial divisorial contraction appears in one of the two-ray games, then $-K_X^3\leq 72$, unless $\dim Z_l=1$ and $\dim\varphi_r(E)=0$. Moreover, equality holds only if $X=\bP_{\bP^2}(\cO_{\bP^2}\oplus\cO_{\bP^2}(3))$.
	\end{enumerate}
In $(${\bf II}$)$, when $\dim Z_l=1$ and $\dim\varphi_r(E)=0$, we have $-K_X^3\leq 81.$	
\end{main}
\begin{remark} The exceptional case in ({\bf II}), i.e., when $\dim Z_l=1$ and $\dim\varphi_r(E)=0$, will be treated in a forthcoming paper, and we expect that the estimation $-K_X^3\leq72$ remains to hold.   
\end{remark}

From Theorem \ref{ub} and our main theorem, we suspect the upper bound 72 is optimal for all cases.
\theoremstyle{theorem}
\newtheorem*{main2}{Main Question}
\begin{main2} For $X$ a weak $\bQ$-Fano threefold, the anticanonical volume satisfies $-K_X^3\leq 72.$  
\end{main2}

Our first strategy to attack the Main Question is to utilize the geometry of all possible birational models of $X$ after running two ray games along with extremal rays of $X$. This works if $\rho(X)=2$.

If $\rho(X)=2$, then there are only two birational models and $-K_X$ can be expressed as a sum of two divisors coming from these two models. The simplest case is when $X$ equips simultaneously a del Pezzo fibration $\varphi:X\ra\bP^1$ and a conic bundle $\psi:X\ra Z$. Then $-K_X\equiv aF+bH$, where $F$ is a general fibre of $\varphi$ and $H$ is the pullback of some ample divisor on $Z$. It follows easily that $-K_X^3\leq 54$ from Lemma \ref{key}, \ref{Z2}, and the length of extremal curves on del Pezzo surfaces. Two problems appear in this approach. The minor one is that one may obtain a Mori fibre space only after a non-trivial small birational map. If there are two Mori fibre spaces birational to $X$, then this is solved in Section \ref{I} as the case ({\bf I}). The more severe problem is the existence of a $K$-trivial extremal divisorial contraction contracting a divisor $E$ on a birational model $X'$ of $X$. In this case, the singularities of $(X',E)$ is not well understood, cf. Remark \ref{tech}. This case is treated in Section \ref{II} as the case ({\bf II}).
		
When $\rho(X)>2$, there is no simple expression of $-K_X$ in terms of divisors from the birational models of $X$ even if all the extremal rays lead to Mori fiber spaces. The second strategy to attack the Main Question is then to examine case by case the geometry of extremal contractions of $X$. For example, if $\varphi_R$ is of fiber type over $\bP^1$, then $\rho(X)=2$ and we have shown $-K_X^3\leq 72$. We briefly describe how then to bound the anticanonical volume in this approach, where our Main Theorem serves as a successful first step. We also state some related problems along the way. 

Start with a $K_X$-negative extremal contraction $\varphi_R:X\ra X'$ of a weak $\bQ$-Fano threefold $X$, which always exists in our setting. We may assume now $\varphi_R$ is of fiber type over a surface $Z$, then our result again says that $-K_X^3\leq72$ provided $\rho(X)=2$ (equivalently $\rho(Z)=1$). So the next step is to drop the Picard rank condition. 
\begin{question} Let $X$ be a weak $\bQ$-Fano threefold. If one of its $K_X$-negative extremal contractions $\varphi_R:X\ra Z$ is a conic bundle, then $-K_X^3\leq72$.
\end{question}
If $\varphi_R$ is a divisorial contraction to a point, then $X'$ remains a weak $\bQ$-Fano threefold with $-K_{X'}^3\geq-K_X^3$. We replace $X$ by $X'$ and proceed. If $\varphi_R$ is divisorial to a curve, then $X'$ is not necessarily weak Fano. If $X'$ is weak $\bQ$-Fano, then again we replace $X$ by $X'$ and proceed. Hence we may assume that any $K$-negative extremal contraction is a divisorial contraction to curve or small. 
\begin{question} Are there examples of weak $\bQ$-Fano threefolds whose $K$-negative extremal contractions are either divisorial to curves or small? Can we show $-K_X^3\leq72$ in this case?
	\end{question} 
Finally, there is also a chance by investigating the geometry of the pluri-anticanonical systems of $X$. It is clear that much more has to be done in the future in solving Main Question. 

The last remark is that the reduction of computing anticanonical volumes to the class of weak $\bQ$-Fano varieties also apply to the class of canonical weak Fano varieties. Hence our strategies and results extend freely to this class of varieties. 

Here is the organization of this paper: In Section \ref{aux}, we present relevant known results and prepare some crucial lemmas. In Section \ref{pre}, by running two ray games, it is shown that we can split the study into two cases ${\bf (I)}$ and ${\bf (II)}$. In Section \ref{I}, we deal with case ${\bf (I)}$, which essentially corresponding to double Mori fiber spaces. In Section \ref{II}, we consider the case ${\bf (II)}$, where one of the extremal rays is $K$-trivial and divisorial. In Section \ref{opt}, we classify weak $\bQ$-Fano threefolds of Picard rank two with optimal degree 72 and hence complete the proof of Main Theorem. 

\vskip 0.2 cm

\ni\textbf{Acknowledgements.} The author is grateful for many inspiring discussions with Jungkai A.  Chen and Jheng-Jie Chen. The author also thanks Yuri Prokhorov for answering his questions and Shin-Yao Jow for consultation on Mori dream spaces. The author is grateful to Research Institute for Mathematical Sciences in Japan where this work was partially done. This work is supported by National Center of Theoretical Sciences in Taiwan and funded by MOST 106-2115-M-006-019. Finally, the author thanks the referees for their useful comments and suggestions. 

\section{Auxiliary results}\label{aux} A general reference for the minimal model program is \cite{KM}. On the other hand, we include here several relevant results for estimating the anticanonical volumes. The first statement is the Negativity lemma. 
\begin{lemma}\label{neg} Let $\chi:X_1\dra X_2$ be an isomorphism in codimension one of normal $\bQ$-factorial varieties. Let $D_1\geq0$ a divisor on $X_1$ and $D_2$ be its proper transform on $X_2$. If $D_1$ is nef, then for a common resolution $p_i:W\ra X_i$ for $i=1,2$, the divisor $E:=p_2^*D_2-p_1^*D_1$ is effective and exceptional over $X_i$'s.
\end{lemma}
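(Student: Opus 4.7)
The plan is to reduce to the classical Negativity Lemma (e.g.\ \cite[Lemma~3.39]{KM}) applied to $p_2\colon W\to X_2$. The key preparation uses the codimension-one isomorphism hypothesis in two ways: (a)~prime divisors of $X_1$ correspond bijectively to prime divisors of $X_2$ via proper transform, so under the identification $K(X_1)=K(X_2)=K(W)$ the codimension-one valuations on the two sides coincide as sets; (b)~consequently a prime divisor $G\subset W$ is $p_1$-exceptional if and only if it is $p_2$-exceptional, since the center of $\ord_G$ on $X_1$ has codimension one precisely when its center on $X_2$ does. From (a), the strict transforms of $D_1$ via $p_1$ and of $D_2$ via $p_2$ agree as a common divisor $\tilde D$ on $W$ (with matching coefficients by $\bQ$-factoriality), and from (b) I may write
\[
p_1^*D_1=\tilde D+F_1,\qquad p_2^*D_2=\tilde D+F_2,
\]
where $F_1,F_2$ are supported on divisors exceptional over \emph{both} $X_1$ and $X_2$. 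Thus $E=F_2-F_1$ is exceptional over each $X_i$, which settles one half of the conclusion.

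For the effectivity of $E$, I apply the Negativity Lemma to $p_2\colon W\to X_2$ with the divisor $E$. Since $E$ is $p_2$-exceptional, $(p_2)_*E=0$, which is trivially effective. To verify that $-E$ is $p_2$-nef, take any $p_2$-vertical curve $C\subset W$; the projection formula gives $p_2^*D_2\cdot C=D_2\cdot(p_2)_*C=0$, so
\[
(-E)\cdot C=p_1^*D_1\cdot C-p_2^*D_2\cdot C=p_1^*D_1\cdot C\geq0,
\]
because $p_1^*D_1$ is nef on $W$ as the pullback of the nef divisor $D_1$. The Negativity Lemma then forces $E\geq0$.

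The main obstacle is the set-up in the first paragraph: matching strict transforms of $D_1$ and $D_2$ on $W$, and showing that ``$p_1$-exceptional'' and ``$p_2$-exceptional'' describe the same set of prime divisors on $W$. Both rely essentially on $\chi$ being an isomorphism in codimension one; without this hypothesis the clean decomposition $p_2^*D_2-p_1^*D_1=F_2-F_1$ with both $F_i$ simultaneously exceptional over both bases would fail. Everything after the decomposition is a routine invocation of the classical negativity for birational contractions.
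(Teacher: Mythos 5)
Your argument is correct: the decomposition $p_i^*D_i=\tilde D+F_i$ with both $F_i$ simultaneously exceptional (valid precisely because $\chi$ is an isomorphism in codimension one), followed by the classical Negativity Lemma applied to $p_2$ using that $p_1^*D_1$ is nef on $p_2$-vertical curves, is exactly the standard proof. The paper itself gives no argument and simply cites \cite{KM}, so your write-up is a faithful expansion of the proof the paper delegates to that reference.
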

\begin{proof} See \cite{KM}.
\end{proof}

Next, we turn to the geometry of conic bundles. When there is a conic bundle $X\ra Z$ from a terminal projective threefold, the geometry of the surface $Z$ is quite restrictive. The following work is essential to us.
\begin{theorem}\label{conic}\cite{MP} Let $X$ be a $\bQ$-factorial terminal threefold with an extremal contraction $X\ra Z$. If $\dim Z=2$, then $Z$ has only singularities of type $A$.
\end{theorem}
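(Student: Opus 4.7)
\ Denote by $f:X\to Z$ the given extremal contraction. The plan is to work analytically near a singular point $p\in Z$, cut by a general curve in $Z$ through $p$, and transfer the terminality of $X$ into a strong constraint on the singularity type of $(Z,p)$.

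First I would choose a general smooth curve germ $C\subset Z$ through $p$ and set $S := f^{-1}(C)\subset X$. Since $S$ is a generic Cartier divisor on the terminal threefold $X$ cut out by pulling back from $Z$, Bertini-type arguments together with adjunction and the classical fact that a general element of a sufficiently mobile linear system on a terminal threefold has Du Val singularities allow me to arrange that $S$ has Du Val singularities, all of them lying over $p$. The induced map $S\to C$ is a conic fibration over a smooth curve germ with a unique degenerate fibre over $p$. By the classification of degenerate fibres of conic bundles over a smooth curve (either two lines meeting at a point, or a double line), the possible Du Val singularities appearing on $S$ over $p$ are forced to lie in the $A_n$-family only.

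The next step is to upgrade this slice-wise $A_n$-type information to a statement about $(Z,p)$ itself. The rough idea: letting $C$ vary in a $1$-parameter family of directions through $p$, one obtains a family of $A$-type chains on the slice surfaces $S_C := f^{-1}(C)$, and by tracking how this chain deforms as $C$ rotates one reads off the dual graph of the minimal resolution of $(Z,p)$. Compatibility with the chain structure on each slice forces the dual graph of the resolution of $(Z,p)$ to be itself a chain, which is precisely the type $A$ condition.

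The main obstacle is making this transfer rigorous. The trouble is that the central fibre of $f$ over $p$ may be non-reduced or reducible, so the naive comparison of $S_C$ and $S_{C'}$ for two transverse slices is not faithful to the local geometry at $p$. I expect the cleanest route is to pass to the index-one cover of $(X,p)$, reducing to the case when $X$ is Gorenstein with compound Du Val singularities over $p$, and then to invoke the explicit classification of cDV germs admitting a conic bundle structure. Ruling out $D_n$ and $E_k$ configurations on the base then reduces to showing that the branching of such a resolution graph cannot be produced by any conic bundle whose total space is terminal, since the degenerate fibres only yield chain configurations on any transverse slice.
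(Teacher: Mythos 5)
The first thing to note is that the paper does not prove this statement at all: it is quoted verbatim from \cite{MP}, where it is one of the main theorems, established by a long analysis of the local structure of $X$ along the central fibre $f^{-1}(p)$ in the spirit of Mori's work on flips. So your proposal has to be measured against that body of work, and it does not come close to replacing it.

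The central gap is already in your first step. Bertini controls a general member of a linear system only away from its base locus, and the system you pull back --- curves on $Z$ through the singular point $p$ --- has base locus exactly the central fibre $f^{-1}(p)$, which is precisely where the singularities of $S=f^{-1}(C)$ that you need to understand are located. The assertion that such an $S$ has Du Val singularities is not a ``classical fact''; it is a form of Reid's general elephant conjecture for $\bQ$-conic bundles, and proving it over singular points of the base is the main technical achievement of \cite{MP}. In other words, the tool you invoke at the outset is essentially the theorem being proved. The later steps also do not close. Even granting that $S$ is Du Val and fibred in conics over a curve germ, the classification of degenerate fibres constrains the reduced central fibre of $S\to C$, not the dual graph of its resolution: the germ $x^2+y^2z+z^{n-1}=0$, projected to the $z$-line, is a $D_n$ point on a surface whose fibres are conics and whose degenerate fibre is a double line, so ``only chains on slices'' is not forced by fibre geometry alone. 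The transfer from the slices $S_C$ to the germ $(Z,p)$, and the reduction via index-one covers, both presuppose control of the local structure of $f$ over $p$ (the indices of the points of $X$ on $f^{-1}(p)$, the reducibility and multiplicity of the central fibre, the action of the covering group), which is again exactly the unknown; and there is no off-the-shelf ``classification of cDV germs admitting a conic bundle structure'' to invoke --- that classification is part of what \cite{MP} establish. The honest conclusion is that this statement cannot be recovered by a soft slicing argument and should be treated as a black-box citation, as the paper does.
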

As a consequence, we can do some relevant numerical computations on $Z$. The following proposition is a generalization of results for conic bundles in \cite{MM, Pr1}  and should be well-known to experts. We include the proof here for the convenience of the reader. 
\begin{proposition}\label{disc} Let $f:X\ra Z$ be a $K$-negative extremal contraction from a weak $\bQ$-Fano threefold of $\rho(X/Z)=1$ with $\dim Z=2$. Then $Z$ is $\bQ$-factorial by Theorem \ref{conic}. Furthermore,  
	\begin{enumerate}[$(a)$]
		\item $-4K_Z\equiv f_*K_X^2+\Delta$, where $\Delta$ is a reduced Weil divisor, and
		\item $Z$ is a weak del Pezzo surface and $\rho(X)\leq 10$.
	\end{enumerate} 
\end{proposition}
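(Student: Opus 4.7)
The strategy is classical for Mori conic bundles: derive the discriminant formula (a) by an intersection-theoretic computation on a rank-$3$ projective bundle containing $X$, and read off (b) from (a) combined with the nef-and-bigness of $-K_X$. The $\bQ$-factoriality of $Z$ is immediate from Theorem \ref{conic}, since type $A$ surface singularities are cyclic quotient, hence $\bQ$-factorial.

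For (a), since $-K_X$ is $f$-ample with $\rho(X/Z)=1$ and $\dim Z = 2$, $f$ is a Mori conic bundle: by Mori's classification of extremal contractions, all fibres of $f$ are $1$-dimensional, $f_*\cO_X = \cO_Z$, and the general fibre is $\bP^1$. A codimension-one locus $\Delta \subset Z$ parametrises the reducible or non-reduced fibres. Set $\cE := (f_*\cO_X(-K_{X/Z}))^{\vee\vee}$, a reflexive rank-$3$ sheaf on $Z$, locally free on the smooth locus $Z^\circ \subset Z$. Over $Z^\circ$ the relative anticanonical map embeds $X$ into $\bP_{Z^\circ}(\cE)$ as the zero locus of a quadratic form $q$, and $\det q$ cuts out $\Delta$. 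Using the adjunction $K_X = (K_{\bP(\cE)} + X)|_X$, the Euler sequence for $\bP(\cE)$, and the projection formula, a direct intersection computation yields $f_*K_X^2 \equiv -4K_Z - \Delta$ on $Z^\circ$. Since both sides are Weil divisors on a normal surface and $Z \setminus Z^\circ$ has codimension $\geq 2$, the relation extends to all of $Z$. Reducedness of $\Delta$ in the $\bQ$-factorial terminal setting follows from the Mori--Prokhorov classification of fibres of terminal Mori conic bundles, cf.~\cite{MP}.

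For (b), to show $-K_Z$ is nef, take any irreducible curve $C \subset Z$ and apply the projection formula to (a):
\[ -4K_Z \cdot C = K_X^2 \cdot f^*C + \Delta \cdot C. \]
If $C \not\subset \Delta$, then $\Delta \cdot C \geq 0$ and $K_X^2 \cdot f^*C = \bigl((-K_X)|_{f^*C}\bigr)^2 \geq 0$, since $(-K_X)|_{f^*C}$ is a nef $\bQ$-divisor on the effective surface $f^*C$. For $C \subset \Delta$, pass to the crepant minimal resolution $\pi : \tilde Z \to Z$ (available as type $A$ = Du Val), where $-K_{\tilde Z} = \pi^*(-K_Z)$, and run the analogous computation on $\tilde Z$ using the strict transform of $\Delta$ together with the $(-2)$-exceptional curves (which are $K_{\tilde Z}$-trivial). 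Bigness of $-K_Z$ is obtained by expanding $(-K_X)^3 = (-f^*K_Z - K_{X/Z})^3$ via the projection formula and combining with the formula in (a): the top intersection $(-K_X)^3 > 0$ and the conic-bundle structure of $K_{X/Z}$ pin down $(-K_Z)^2 > 0$. Hence $Z$ is weak del Pezzo. The classification of smooth weak del Pezzo surfaces applied to $\tilde Z$ (using Noether's formula for rational surfaces) gives $\rho(\tilde Z) \leq 9$, and since $\pi$ is crepant and $Z$ is $\bQ$-factorial, $\pi^*$ is injective on $\Pic_\bQ$, so $\rho(Z) \leq \rho(\tilde Z) \leq 9$. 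Combined with $\rho(X) = \rho(Z) + \rho(X/Z) = \rho(Z) + 1$, this yields $\rho(X) \leq 10$.

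The main obstacle I anticipate is establishing the \emph{reducedness} of $\Delta$: the scheme-theoretic vanishing of $\det q$ could a priori carry embedded structure, and arguing that only ``two-line'' degenerate fibres appear, and not genuine double-line fibres contributing a non-reduced discriminant, requires invoking the full Mori--Prokhorov fibre classification in the terminal $\bQ$-factorial setting. A secondary concern is the nefness verification for curves $C \subset \Delta$, most cleanly handled by descending to the crepant minimal resolution of $Z$ and transferring the computation there.
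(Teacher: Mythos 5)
Your treatment of part (a) takes a genuinely different (and legitimate) route from the paper's: you embed $X$ into $\bP_Z(\cE)$ and read $\Delta$ off as the vanishing of $\det q$, whereas the paper restricts to a general very ample curve $B\subset Z$ avoiding $f(\Sing(X))\cup\Sing(Z)$, observes that $X_B\ra B$ is a blow-up of a $\bP^1$-bundle with one $(-1)$-curve per degenerate fibre, and obtains $-f_*(K_{X/Z}^2)\cdot B=\Delta\cdot B$ directly. In that approach the reducedness you worry about comes for free: over the generic point of each component of $\Delta$ the total space is smooth (the singularities of $X$ are isolated), so the degenerate fibres met by a general $B$ are two distinct lines and each contributes exactly $1$; no appeal to the full fibre classification of \cite{MP} is needed. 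Your $\bQ$-factoriality remark, the nefness check for $C\nsubseteq\Supp(\Delta)$ (indeed $L:=f_*K_X^2$ satisfies $L\cdot C=K_X^2\cdot f^*C\geq0$ since $-K_X$ is nef), and the count $\rho(X)=\rho(Z)+1\leq\rho(\tilde Z)+1\leq 10$ are all fine.

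The genuine gap is the nefness of $-K_Z$ along curves $C\subseteq\Supp(\Delta)$. ``Run the analogous computation on $\tilde Z$'' is not an argument: there is no conic bundle over $\tilde Z$ to recompute with, and pulling back $-4K_Z\equiv L+\Delta$ still leaves the term $\Delta\cdot C=C^2+(\Delta-C)\cdot C$, with $C^2$ a priori unbounded below --- which is exactly the quantity you cannot control. The missing ingredient is an adjunction/genus bound on $C$ itself: writing $g^*C=\tilde C+E$ on the crepant minimal resolution gives $(K_Z+C)\cdot C=2p_a(\tilde C)-2+E\cdot\tilde C\geq-2$; combining this with reducedness of $\Delta$ (so $(\Delta-C)\cdot C\geq0$), with $L\cdot C\geq0$, and with the fact that $K_Z$ is Cartier (type $A$, hence Du Val, singularities) so that $K_Z\cdot C$ is a positive integer under the assumption $K_Z\cdot C>0$, one gets $-2\leq(K_Z+\Delta)\cdot C=-(3K_Z+L)\cdot C\leq-3$, a contradiction. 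Without this step part (b) does not close. A secondary soft spot: expanding $(-K_X)^3=(-f^*K_Z-K_{X/Z})^3$ does not ``pin down'' $(-K_Z)^2>0$, since the term $K_{X/Z}^3$ is not controlled; the intended argument is simply that $L=f_*K_X^2$ is movable and big (as $|-mK_X|$ is birational for $m\gg0$), so $-4K_Z\equiv L+\Delta$ is big.
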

\begin{proof} Since $f:X\ra Z$ has only 1-dimensional fibres, $f$ is flat over $Z':=Z-f(\Sing(X))\cup\Sing(Z)$, cf. \cite{MM}. In particular, the discriminant divisor $\Delta_f$ defined over $Z'$ is a simple normal crossing divisor and we take $\Delta=\overline{\Delta_f}$ on $Z$. 

To prove the first statement, we observe first that $X$ has isolated singularities. Take $B$ on $Z$ a general very ample smooth curve avoiding the discrete set $f(\Sing(X))\cup\Sing(Z)$ and intersecting transversely to $\Delta$, then $T:=X_B\ra B$ is a blow up of a $\bP^1$-bundle $T'\ra B$ as singular fibres of $T$ can only be the union of two different lines. Since $K_{T'/B}^2=0$ and $K_{T/B}=K_{X/Z}|_T=\pi^*K_{T'/B}+\sum_{i=1}^kE_i$, where $\pi:T\ra T'$ consists of smooth blow-ups at distinct points $p_1,\dots,p_k$ and $E_i$'s are the corresponding $(-1)$-curves on $T$, $-K_{T/B}^2=\sum_{i=1}^k(-E_i^2)$ is rational equivalence to the sum of the singular points of fibres with all coefficients 1. Hence 
	$$-f_*(K_{X/Z}^2)\cdot B=-f_*(K_{X/Z}^2\cdot f^*B)=-f_*(K_{T/B}^2)=\Delta\cdot B.$$ 
On the other hand, consider $f_*:A^2(X)\ra A^2(Z)\cong\bZ[Z]$ the proper push-forward. Then $f_*K_X=m[Z]$ as 2-cycles on $Z$ for some $m\in\bZ$. By using the projection formula for the flat pull-back $f^*:A^*(Z')\ra A^*(f^{-1}(Z'))$, we have $m=m[Z]|_{Z'}\cdot[{\rm pt}]=m[Z']\cdot[{\rm pt}]=-K_{f^{-1}(Z')}\cdot l=-K_X.l=2$ for a general fibre $l$ of $f:X\ra Z$. This proves the first statement.  
	
Since $|m(-K_X)|$ defines a birational morphism for \mbox{$m\gg0$} by base point freeness theorem \cite{KM}, $L=f_*(K_X^2)$ is movable and big on $Z$. It follows that $-K_Z$ is big by $(a)$. We now show that $-K_Z$ is nef. Suppose that $K_Z\cdot C>0$ for some integral curve $C$ on $Z$, then $(4K_Z+\Delta)\cdot C=-L\cdot C\leq0$ as $L$ is movable. In particular, 
$$\Delta\cdot C=-L\cdot C-4K_Z\cdot C<0,$$ 
and $C$ must be a component of $\Supp(\Delta)$. It follows that $$C^2=\Delta\cdot C-(\Delta-C)\cdot C<0.$$ 
	
Let $g:\tilde{Z}\ra Z$ be the minimal resolution of $Z$ and write $g^*C=\tilde{C}+E$ for some exceptional $E\geq0$. Since $g^*K_Z=K_{\tilde{Z}}$ by Theorem \ref{conic}, we get 
	$$(K_Z+C)\cdot C=(K_{\tilde{Z}}+\tilde{C})\cdot g^*C=2p_a(\tilde{C})-2+E\cdot \tilde{C}\geq-2.$$
We consider two possibilities. If $C\subseteq\Supp(\Delta)$, then as $\Delta$ is reduced, 
	$$-2\leq (K_Z+C)\cdot C\leq (K_Z+\Delta)\cdot C=-(3K_Z+L)\cdot C\leq-3K_Z\cdot C.$$
This is absurd since $K_Z$ is Cartier. If $C\nsubseteq\Supp(\Delta)$, then 
	$$-2\leq (K_Z+C)\cdot C\leq(K_Z+\Delta+C)\cdot C=-(3K_Z+L)\cdot C+C^2<-3K_Z\cdot C.$$
We still get a contradiction as above. 
\end{proof}	

\section{Running two-ray games}\label{pre}
\subsection{Construction} Let $X$ be a weak $\bQ$-Fano threefold of Picard rank $\rho(X)=2$. Then the Mori cone of $X$ is $\NE(X)=\bR_{\geq0}[\Gamma_1]+\bR_{\geq0}[\Gamma_2]$ for some rational curves $\Gamma_i$'s.  Moreover, there exists extremal contractions $\varphi_i:X\ra Y_i$ for each extremal ray $R_i:=\bR_{\geq0}[\Gamma_i]$. There are three possible types of the morphism $\varphi_i$: divisorial, small, or fibre type. 
\begin{lemma}\label{vol} The anticanonical volume is nondecreasing for each step of a minimal model program of birational type.
\end{lemma}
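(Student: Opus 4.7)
The plan is to treat the two kinds of birational MMP operations---divisorial contractions and flips---separately, reducing each to two elementary properties of the volume function: (a) $\Vol(\pi^*D)=\Vol(D)$ for any proper birational morphism $\pi$, and (b) $\Vol(D+F)\geq\Vol(D)$ whenever $F$ is an effective divisor.

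For a divisorial step $\varphi:X\ra Y$ with exceptional divisor $E$, terminality of $X$ combined with $-K_X$ being nef (so $\varphi$ contracts a $K_X$-non-positive ray) yields the discrepancy identity $K_X=\varphi^*K_Y+aE$ with $a\geq0$, equivalently $\varphi^*(-K_Y)=-K_X+aE$. Applying (a) and then (b) gives $\Vol(-K_Y)=\Vol(\varphi^*(-K_Y))=\Vol(-K_X+aE)\geq\Vol(-K_X)$. For a flip $\chi:X\dra X^+$, I take a common resolution $p:W\ra X,\,q:W\ra X^+$. Since $-K_X$ is nef and big on the weak $\bQ$-Fano source, for $m\gg0$ the divisor $D:=-mK_X$ is nef and effective, and its proper transform along the codimension-one isomorphism $\chi$ is $D^+=-mK_{X^+}$ (both $X,X^+$ being normal, so divisor classes extend uniquely across the codimension-$\geq2$ indeterminacy locus). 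The Negativity Lemma (Lemma \ref{neg}) then provides $q^*D^+-p^*D=F$ with $F\geq0$ exceptional over both $X$ and $X^+$. Combining (a) and (b), $\Vol(-mK_{X^+})=\Vol(q^*D^+)=\Vol(p^*D+F)\geq\Vol(p^*D)=\Vol(-mK_X)$, and rescaling by $m^3$ delivers $\Vol(-K_{X^+})\geq\Vol(-K_X)$.

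I do not expect any serious obstacle; the only delicacy is that the Negativity Lemma is formulated for \emph{effective} nef divisors, which forces the harmless scaling to $-mK_X$ with $m\gg0$ in the flip case, together with a corresponding rescaling of the volume inequality by $m^3$. The unifying theme is that a birational MMP step, pulled back to any shared birational model of source and target, alters $-K$ only by an effective exceptional contribution, which by (a) and (b) cannot decrease the volume.
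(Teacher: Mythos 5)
Your overall mechanism --- pull both sides back to a common resolution, observe that they differ by an effective exceptional divisor, and invoke $\Vol(\pi^*D)=\Vol(D)$ together with $\Vol(D+F)\geq\Vol(D)$ for $F\geq0$ --- is exactly the paper's; the paper simply compresses both cases into the single identity $p^*K_X=q^*K_{X'}+E$ with $E\geq0$ on a common resolution, i.e.\ the standard monotonicity of discrepancies under an MMP step. Your divisorial case is correct as it stands, and in fact works for any $K$-non-positive divisorial contraction: the inequality $a\geq0$ follows from $K_X\cdot\Gamma\leq0$ and $E\cdot\Gamma<0$ for a contracted curve $\Gamma$, not from terminality of $X$ or from global nefness of $-K_X$.

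The gap is in your flip case: you assume the source of the flip is the weak $\bQ$-Fano $X$ itself, so that $-K_X$ is globally nef and Lemma \ref{neg} applies. But the lemma is used in Section \ref{pre}, case ${\bf (S)}$, for every step of a possibly long sequence of flips, and after the first genuine ($K$-negative) flip the flipped curve satisfies $K_{X^+}\cdot C^+>0$, so $-K_{X^+}$ is no longer nef and your hypothesis fails from the second flip onward. The repair is standard: for a flip $X\ra Y\xla{} X^+$ of a $K$-non-positive small contraction, apply the negativity lemma \emph{relatively over $Y$} (as in Koll\'ar--Mori, Lemmas 3.38--3.39): the divisor $F=q^*(-K_{X^+})-p^*(-K_X)$ on a common resolution is exceptional over $Y$, and $-F$ is nef over $Y$ because $-K_X$ is nef over $Y$ (the contracted ray is $K$-non-positive) and $K_{X^+}$ is nef over $Y$ by the definition of the flip; hence $F\geq0$ with no global positivity assumption on the source. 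Alternatively, for the paper's actual application you could apply Lemma \ref{neg} once to the \emph{composite} small birational map from the original weak Fano $X$ to the final small modification, since that composite is still an isomorphism in codimension one and its source does have nef anticanonical divisor, and then conclude with your divisorial argument for the last contraction.
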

\begin{proof} If a birational map $X\dra X'$ is a step of a minimal model program, then for a common resolution $p:W\ra X$ and $q:W\ra X'$, we have by negativity lemma that $p^*K_{X}=q^*{K_X'}+E$ for some $E\geq0$. In particular, 
\begin{align*}\Vol(-K_{X'})=\Vol(p^*(-K_X)+E)
					\geq\Vol(p^*(-K_X))=\Vol(-K_X).\end{align*}
\end{proof}
\begin{definition} Let $\varphi:X\ra Y$ be an extremal contraction. We say $\varphi$ is $K$-trivial if $K_X\cdot C=0$ for any curve contracted by $\varphi.$
	\end{definition}
\begin{lemma}\label{qfac} If $\varphi:X\ra Y$ is a $K$-trivial divisorial extremal contraction of $\rho(X/Y)=1$, then $Y$ is $\bQ$-factorial.
\end{lemma}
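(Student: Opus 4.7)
The plan is to realize every Weil divisor on $Y$ as the pushforward of a $\bQ$-Cartier divisor on $X$ that is $\varphi$-numerically trivial, and then descend. Let $E$ denote the $\varphi$-exceptional prime divisor, and fix an irreducible curve $C\subset X$ whose class generates the contracted extremal ray $R$.

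First I would record the sign relation $E\cdot C<0$. Since $\rho(X/Y)=1$ and $\varphi$ is divisorial, every curve contained in a positive-dimensional fiber of $\varphi$ has class proportional to $[C]$, and $E$ is covered by such curves; restricting to $E$, the divisor $E|_E$ cannot be $\varphi|_E$-pseudo-effective because $\varphi(E)$ has strictly smaller dimension, which forces $E\cdot C<0$.

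Now let $D$ be an arbitrary prime Weil divisor on $Y$ and $\tilde D$ its strict transform on $X$; since $X$ is $\bQ$-factorial, $\tilde D$ is $\bQ$-Cartier. Define
\[
a:=-\frac{\tilde D\cdot C}{E\cdot C}\in\bQ,\qquad L:=\tilde D+aE,
\]
so $L$ is $\bQ$-Cartier with $L\cdot R=0$. Because $\rho(X/Y)=1$, the class of $L$ in $N^1(X/Y)_{\bQ}$ vanishes, i.e.\ $L$ is $\varphi$-numerically trivial. The key step is to promote this to $L\sim_{\bQ}\varphi^*M$ for some $\bQ$-Cartier $M$ on $Y$; granting this, pushing forward with $\varphi_*\tilde D=D$ and $\varphi_*E=0$ gives $D\sim_{\bQ}M$, so $D$ is $\bQ$-Cartier, and $Y$ is $\bQ$-factorial.

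The descent step is the main obstacle and is where the $K$-trivial hypothesis earns its keep. Fix an ample $\bQ$-Cartier divisor $A$ on $Y$; for $m\gg0$ the divisor $L+m\varphi^*A$ is nef and big on $X$. Since $K_X$ is $\varphi$-numerically trivial by hypothesis, $K_X+L+m\varphi^*A$ is $\varphi$-nef and $\varphi$-big, so the relative base point free theorem (applied over $Y$) produces a morphism whose Stein factorization is $\varphi$ itself, giving the factorization $L\sim_{\bQ}\varphi^*M$. In the classical $K$-negative case this descent is immediate from the Cone Theorem; here $K$-triviality still suffices, provided one is careful to add a small $\varphi^*A$ to preserve the relative positivity needed to invoke relative base point freeness.
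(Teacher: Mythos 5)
Your proof is correct, but it takes a genuinely different route from the paper's. The paper's argument is a one-line perturbation trick: since $E\cdot\Gamma<0$ for the contracted curve $\Gamma$ and $K_X\cdot\Gamma=0$, the morphism $\varphi$ is a $(K_X+\epsilon E)$-negative extremal divisorial contraction of the klt pair $(X,\epsilon E)$ for $0<\epsilon\ll1$, and $\bQ$-factoriality of $Y$ (together with $\Exc(\varphi)=E$) is then quoted wholesale from the contraction theorem for klt pairs. You instead unwind that black box: you exhibit, for each prime Weil divisor $D$ on $Y$, the explicit $\varphi$-numerically trivial $\bQ$-Cartier divisor $L=\tilde D+aE$ (which exists precisely because $E\cdot C<0$, the same key fact the paper uses), and you descend $L$ via the relative base point free theorem, observing that $K$-triviality plus birationality of $\varphi$ make the relative positivity hypotheses automatic. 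Your version is longer but more self-contained and makes visible exactly where $K$-triviality enters; the paper's version is slicker and additionally records $\Exc(\varphi)=E$. Two small points of hygiene in your write-up: the hypothesis of the relative base point free theorem is that $aL-K_X$ (not $K_X+L+m\varphi^*A$) be $\varphi$-nef and $\varphi$-big --- harmless here since every divisor in sight is $\varphi$-numerically trivial and every divisor is $\varphi$-big for a birational morphism, but the passage through the global divisor $L+m\varphi^*A$ is an unnecessary detour; and your justification of $E\cdot C<0$ via non-pseudo-effectivity of $E|_E$ is only a gesture --- the clean argument is the negativity lemma (if $E$ were $\varphi$-nef, then $-E$ would be effective since $\varphi_*(-E)=0$), though the paper likewise dismisses this step as easy.
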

\begin{proof} Since $\rho(X/Y)=1$ and $\Exc(\varphi)$ contains an irreducible divisor $E$, it is easy to see that $E.\Gamma<0$ for $\Gamma$ a contracted curve of $\varphi$ and $\varphi$ is a $(K_X+\epsilon E)$-negative extremal contraction for $0<\epsilon\ll1$. In particular, as $(X,\epsilon E)$ is klt, by contraction theorem we know that $\Exc(\varphi)=E$ and $Y$ is $\bQ$-factorial.
\end{proof}

Finally, since $X$ is rationally connected by \cite{HM:RCC, Zh} and hence so is any birational model of $X$, by three dimensional abundance \cite{Miy1, Miy2, Ka} each birational model of $X$ has a $K$-negative extremal ray.

For each possible type of the ($K$-non-positive) extremal contractions $\varphi_i$'s, we consider the following: 
\vskip 0.4 cm
\begin{enumerate}
	\item[${\bf (D_-)}$] {\bf $\varphi_i$ is divisorial and $K$-negative:} Then $Y_i$ is a terminal $\bQ$-Fano threefold as $\rho(Y_i)=1$. By Theorem \ref{ub} and Lemma \ref{vol}, $-K_X^3\leq -K_{Y_i}^3\leq 64.$
\vskip 0.4 cm	
	\item[${\bf (D_0)}$] {\bf $\varphi_i$ is divisorial and $K$-trivial:} From Lemma \ref{qfac}, the resulting variety is $\bQ$-factorial, and it acquires \emph{canonical} singularities.  
\vskip 0.4 cm
	\item[${\bf (S)}$] {\bf $\varphi_i$ is small:} After a flip or flop $X\dra X^+$, there is still a $K$-negative extremal ray of $X^+$ from the discussion of last paragraph. If a divisorial contraction occurs, then we get $-K_X^3\leq64$ by Lemma \ref{vol} as in case ${\bf (D_-)}$. If we have a fibre type contraction, then we get a Mori fibre space. Otherwise, this is a flipping contraction and we take a flip. Continue the same process by considering a $K$-negative extremal contraction, this leads to a sequence of flips, which terminates by the termination of flips in dimension three and the last step must be a $K$-negative divisorial contraction or a Mori fibre space. In the former case, we get $-K_X^3\leq64$ as before. Hence we only need to consider the latter case, where this  two ray game  terminates in a Mori fibre space. 
\vskip 0.4 cm
	\item[${\bf (F)}$] {\bf $\varphi_i$ is of fibre type:} We get a Mori fibre space. 
\end{enumerate}
\vskip 0.4 cm
In conclusion, start with each $\varphi_i$ and run a two ray game, either ${\bf (D_-)}$ appears in some step and we conclude $-K_X^3\leq 64$, or after re-index $\varphi_i$'s to $\varphi_\star$, where $\star\in\{l,r\}$ stands for left and right, we end up with the following two possible diagrams ${\bf (I)}$ and ${\bf (II)}$ depending on whether ${\bf (D_0)}$ appears.

\begin{itemize}
	\item[${\bf (I)}$]\emph{ None of $\varphi_i$'s is of type ${\bf (D_0)}$.} We have a diagram:
	\begin{center}
		\begin{tikzcd}
			&& W\arrow[dll,"p_l", swap]\arrow[drr,"p_r"]\arrow[d,"h"]  & &\\
			X_l\arrow[dd,"f_l"] && X\arrow[rd,"\varphi_r"]\arrow[ld,swap,"\varphi_l"]\arrow[ll,dashed,swap,"\chi_l"]\arrow[rr,dashed,"\chi_r"] &  &X_r\arrow[dd,"f_r"]\\
			      &Y_l&&Y_r& \\
			Z_l   &&& & Z_r, 
		\end{tikzcd} 
	\end{center}
	where 
	\begin{enumerate}[$(i)$]
		\item $\chi_\star$ is either identity or a small birational map; 
		\item $f_\star$ is a Mori fiber space. Note that it is possible here $\chi_l$ is identity and $\varphi_l=f_l$, and similarly for $\chi_r$;
		\item $p_\star$ and $h$ are smooth resolutions of $X_\star$ and $X$, and hence birational.
	\end{enumerate}
\vskip 0.5 cm
	\item[${\bf (II)}$] \emph{One of $\varphi_*$'s (at most one), say $\varphi_r$, is of type ${\bf (D_0)}$.} We have a diagram:
		\begin{center}
		\begin{tikzcd}
			&&& W\arrow[dlll,"p_l", swap]\arrow[d,"h"] & \\
			X_l\arrow[d,"f_l"] &&& X\arrow[dll,swap,"\varphi_l"]\arrow[lll,dashed,swap,"\chi_l"]\arrow[r,"\varphi_r"]   &Y_r\\
			Z_l   &Y_l&&E\arrow[u,hook]\arrow[r]&\varphi_r(E)\arrow[u,hook],
		\end{tikzcd} 
		\end{center}
	where 	
	\begin{enumerate}[(i)]
		\item $\chi_l$ is either identity or a small birational map; 
		\item $f_l$ is a Mori fiber space. Note that it is possible here that $\chi_l$ is identity and $\varphi_l=f_l$;
		\item $p_l$ and $h$ are smooth resolutions of $X_l$ and $X$, and hence birational.		
	\end{enumerate}
\end{itemize}
Note that as the morphism $f_\star$ is a Mori fiber space and $\rho(X_\star)=\rho(X)=2$, the base of the resulted Mori fibre spaces can only have $\dim Z_\star=1$ or $2$.

\subsection{Notation} If $\dim Z_\star=1$, then $Z_\star=\bP^1$ and we set $F_\star$ on $X_\star$ to be a general fiber of $f_\star:X_\star\ra Z_\star$. If $\dim Z_\star=2$, then $Z_\star$ is a del Pezzo surface of Picard rank one and we set $H_\star$ on $X_\star$ to be the pull-back of an ample Cartier divisor on $Z_\star$. We will make a choice of $H_\star$ later on. We abuse the notation and write $H_\star$ and $F_\star$ again for their proper transforms on $X$. In Section 5, we also use $H_X$ and $F_X$ for the proper transforms on $X$. The notation should be clear from the context.

\section{Proof of Main Theorem: Case ${\bf (I)}$}\label{I}
Recall that we have the diagram, 
	\begin{center}
		\begin{tikzcd}
			&& W\arrow[dll,"p_l", swap]\arrow[drr,"p_r"]\arrow[d,"h"]  & &\\
X_l\arrow[d,"f_l"] && X\arrow[ll,dashed,swap,"\chi_l"]\arrow[rr,dashed,"\chi_r"] &  &X_r\arrow[d,"f_r"]\\
			Z_l   &&& & Z_r, 
		\end{tikzcd} 
	\end{center}
where the morphism $f_\star:X_\star\ra Z_\star$ is always a Mori fibre space. The estimation of the anticanonical volume in this case is broken down into the following lemmas. 

\begin{lemma}\label{diff} Let $A_\star$ on $X_\star$ be $F_\star$ or $H_\star$ according to $\dim Z_\star=1$ or $2.$ Then the divisors $A_l$ and $A_r$ on $X$ are independent in $N^1(X)_\bR$.
\end{lemma}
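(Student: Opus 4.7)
The plan is to show that the proper transforms of $A_l$ and $A_r$ on $X$ lie on the two distinct extremal boundary rays of the movable cone $\text{Mov}(X) \subset N^1(X)_\bR$, which is two-dimensional since $\rho(X) = 2$. First, since each $\chi_\star : X \dashrightarrow X_\star$ is small birational (or the identity), it induces a canonical isomorphism $N^1(X)_\bR \cong N^1(X_\star)_\bR$ under which the movable cones are identified. On $X_\star$ the class $A_\star$ satisfies $A_\star \cdot R_{f_\star} = 0$ (being a fiber class $F_\star$ or the pullback $H_\star$ of an ample class from $Z_\star$), so it sits on the wall $\perp R_{f_\star}$ of $\Nef(X_\star)$. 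I would argue this wall is actually a boundary ray of $\text{Mov}(X_\star)$: any $D$ with $D \cdot R_{f_\star} < 0$ has negative intersection with a general fiber curve of $f_\star$, a movable curve class sweeping out $X_\star$, and so $D$ fails to be pseudo-effective. Transporting back via the small maps, $A_l$ and $A_r$ therefore lie on extremal boundary rays $\tau_l, \tau_r$ of $\text{Mov}(X)$.

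The crux is to prove $\tau_l \ne \tau_r$. Since $X$ is a weak $\bQ$-Fano threefold, it is a Mori dream space by \cite{BCHM}, so $\text{Mov}(X)$ is rational polyhedral and is subdivided into the ample cones of the various small $\bQ$-factorial modifications of $X$; when $\rho(X) = 2$ these chambers form a linear one-parameter chain. The two-ray games initiated at $R_l$ and $R_r$ traverse this chain in opposite directions, reaching the terminal chambers $\Nef(X_l)$ and $\Nef(X_r)$ at the two ends, and the outer walls of these terminal chambers are precisely the two distinct boundary rays of $\text{Mov}(X)$. Alternatively, and avoiding the Mori dream space machinery, one can argue by contradiction: if $A_l \equiv c\, A_r$ on $X$ for some $c > 0$, set $\psi = \chi_r \circ \chi_l^{-1} : X_l \dashrightarrow X_r$ and intersect with a general fiber curve $\Gamma$ of $f_l$; the projection formula for the small map $\psi$ forces $A_r \cdot \psi_\ast \Gamma = 0$ on $X_r$, and since $A_r$ is nef with $A_r^{\perp} \cap \NE(X_r) = R_{f_r}$ we deduce $\psi_\ast \Gamma \in R_{f_r}$, so $f_r \circ \psi$ factors through $f_l$. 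By symmetry $f_l$ and $f_r$ then define the same birational fibration of $X$, forcing the distinct extremal rays $R_l, R_r$ to share a common fiber class, contradicting $R_l \ne R_r$. With $\tau_l \ne \tau_r$, the classes $A_l \in \tau_l$ and $A_r \in \tau_r$ span two distinct one-dimensional subspaces of the two-dimensional $N^1(X)_\bR$, hence are linearly independent.

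The main obstacle is precisely the distinctness $\tau_l \ne \tau_r$. A direct intersection-matrix attack, computing $\tilde A_\star \cdot \Gamma_?$ for $\Gamma_? \in R_l, R_r$, can yield a singular $2 \times 2$ matrix when both $\chi_\star$ are non-trivial flips, because flipping an extremal ray reverses its sign under the identification $N_1(X) \cong N_1(X_\star)$. Either the linear chamber structure of $\text{Mov}(X)$ coming from the Mori dream space property, or the projection-formula/fibration-collapse argument above, is what is really needed to rule out this singular scenario.
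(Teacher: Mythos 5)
Your proposal is correct and follows essentially the same route as the paper: the paper's proof likewise invokes the Mori dream space property of $X$ (via \cite{BCHM} and \cite{HK}) to place $A_l$ and $A_r$ on the two distinct extremal rays of $\overline{{\rm Mov}}(X)$, which is exactly your main line of argument with the chamber-chain details spelled out. Your extra observations (why each $A_\star$ lies on a boundary ray of the movable cone, and the alternative fibration-collapse argument) are elaborations the paper leaves implicit, but the underlying mechanism is the same.
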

\begin{proof} Since $X$ is a Mori dream space by \cite{BCHM}, by the general theory of Mori dream spaces \cite{HK} the two divisors $A_l$ and $A_r$ on $X$ generate two extremal rays of $\overline{{\rm Mov}}(X)$ and hence are independent.  
	\end{proof}	

\begin{lemma}\label{key} We keep the same notation as in Lemma \ref{diff} and write $-K_X\equiv a_lA_l+a_rA_r$. Then 
	\begin{enumerate}[$(a)$]
		\item the coefficient $a_r\in\bQ$ satisfies $0<a_r\leq -K_{X_l}\cdot C_l$, and similarly for $a_l$; 
		\item there is an inequality, $$-K_X^3\leq a_lK_{X_l}^2\cdot A_l+a_rK_{X_r}^2\cdot A_r.$$
	\end{enumerate}
\end{lemma}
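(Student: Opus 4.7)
My plan is to transfer the numerical relation $-K_X\equiv a_lA_l+a_rA_r$ to the Mori fibre space models $X_l$ and $X_r$ via the small birational maps $\chi_\star$, and then to exploit the fibre structure of $f_\star:X_\star\ra Z_\star$.

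For part $(a)$, I would first push the equivalence forward through $\chi_l$: since $\chi_l$ is a small modification of $\bQ$-factorial varieties, the strict transform induces an isomorphism $N^1(X)_\bQ\xrightarrow{\sim} N^1(X_l)_\bQ$, so one has $-K_{X_l}\equiv a_lA_l+a_rA_r^{(l)}$, where $A_r^{(l)}$ denotes the proper transform of $A_r$. Take $C_l$ to be a general smooth irreducible curve in a fibre of $f_l$ (so $C_l\cong\bP^1$). Because $A_l$ is pulled back from $Z_l$, one has $A_l\cdot C_l=0$, whence
\[
-K_{X_l}\cdot C_l = a_r\cdot(A_r^{(l)}\cdot C_l).
\]
To see $a_r>0$: if $a_r=0$ then $-K_X\equiv a_lA_l$, forcing $-K_X^3=a_l^3A_l^3=0$ (as $A_l$ is pulled back from a variety of dimension $<3$), contradicting bigness of $-K_X$. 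For the upper bound, Lemma~\ref{diff} prevents $A_r^{(l)}$ from being proportional to $A_l$ in $N^1(X_l)$, so $A_r^{(l)}$ is not a pullback from $Z_l$ and in particular does not contain the moving curve $C_l$; since $C_l\cong\bP^1$ is smooth, $A_r^{(l)}|_{C_l}$ is a Cartier divisor of non-negative integer degree, hence $A_r^{(l)}\cdot C_l\geq 1$, yielding $a_r\leq -K_{X_l}\cdot C_l$. The bound on $a_l$ follows symmetrically on $X_r$.

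For part $(b)$, I would expand
\[
-K_X^3 = (-K_X)^2\cdot(a_lA_l+a_rA_r) = a_l\bigl(A_l\cdot(-K_X)^2\bigr)+a_r\bigl(A_r\cdot(-K_X)^2\bigr),
\]
so it suffices to prove $A_l\cdot(-K_X)^2\leq A_l\cdot(-K_{X_l})^2$ and its analogue on $X_r$. On a common resolution $h:W\ra X$, $p_l:W\ra X_l$ of $\chi_l$, nefness of $-K_X$ and Lemma~\ref{neg} give $p_l^*(-K_{X_l})=h^*(-K_X)+E$ with $E\geq 0$ exceptional over both models. Since $\chi_l$ is small, the $h$- and $p_l$-exceptional prime divisors on $W$ coincide, hence $p_l^*(A_l)-h^*(A_l)$ is supported on these common exceptional divisors and intersects $h^*(-K_X)^2$ trivially by the projection formula. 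Writing $K=h^*(-K_X)$ and $K'=p_l^*(-K_{X_l})$, one obtains
\[
A_l\cdot(-K_{X_l})^2 - A_l\cdot(-K_X)^2 = p_l^*(A_l)\cdot E\cdot(K+K')\geq 0,
\]
since $p_l^*(A_l),K,K'$ are nef on $W$ and $E\geq 0$.

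The main obstacle will be the integrality assertion $A_r^{(l)}\cdot C_l\geq 1$ in $(a)$, given that $X_l$ is only $\bQ$-factorial and so $A_r^{(l)}$ a priori only $\bQ$-Cartier. Circumventing this requires choosing $A_r$ as the pullback of a \emph{Cartier} divisor from $Z_r$---available since $Z_r$ is either $\bP^1$ or a surface with at worst $A$-type singularities by Theorem~\ref{conic}---and then taking $C_l$ to be a smooth rational curve, on which the restriction of any Weil divisor is automatically Cartier of integer degree. The identification of common exceptional divisors in $(b)$ is more routine, once the smallness of $\chi_l$ is invoked.
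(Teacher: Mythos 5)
Your overall strategy coincides with the paper's: part $(a)$ by pushing the relation to $X_l$ and intersecting with a general fibre curve of $f_l$, part $(b)$ by passing to a common resolution and comparing pullbacks via the negativity lemma. Two of your justifications, however, do not hold as stated. In $(a)$, the claim that $a_r=0$ would force $-K_X^3=a_l^3A_l^3=0$ ``as $A_l$ is pulled back from a variety of dimension $<3$'' is not valid on $X$: there $A_l$ is only the \emph{proper transform}, under the small map $\chi_l$, of such a pullback, and triple self-intersections of divisors are not preserved by small modifications (this is precisely the phenomenon behind Lemma \ref{vol}). What is preserved is the volume, so the correct deduction is $(-K_X)^3=\Vol(-K_X)=\Vol(a_lA_l)=a_l^3\Vol_{X_l}(A_l)=0$, using that $-K_X\equiv a_lA_l$ would be nef; alternatively, positivity of $a_r$ already follows from your displayed identity $-K_{X_l}\cdot C_l=a_r\,(A_r^{(l)}\cdot C_l)$, whose left-hand side is positive for a Mori fibre space. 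Relatedly, the step from ``$A_r^{(l)}\cdot C_l$ is a non-negative integer'' to ``$\geq 1$'' requires excluding the value $0$: you must observe that $A_r^{(l)}\cdot C_l=0$ would make $A_r^{(l)}$ numerically trivial on the fibres of the extremal contraction $f_l$, hence a numerical pullback from $Z_l$ and so proportional to $A_l$ (as $\rho(Z_l)=1$), contradicting Lemma \ref{diff}; ``not containing $C_l$'' only yields $\geq 0$. (Your closing worry about $A_r^{(l)}$ being merely $\bQ$-Cartier is unnecessary: the general fibre curve $C_l$ lies in the smooth locus of the terminal threefold $X_l$, where the integral Weil divisor $A_r^{(l)}$ is automatically Cartier.)

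In $(b)$, your final inequality $p_l^*(A_l)\cdot E\cdot(K+K')\geq 0$ is justified by asserting that $K'=p_l^*(-K_{X_l})$ is nef, and this can fail: the two-ray game producing $X_l$ may involve genuine $K$-negative flips, after which $-K_{X_l}$ is negative on the flipped curves, so $-K_{X_l}$ need not be nef. (Note that the paper's use of Lemma \ref{neg} only ever requires nefness of $-K_X$ itself.) The inequality is nevertheless true and your factorization survives: writing $p_l^*(A_l)\cdot E\cdot(K+K')=p_l^*(A_l)\cdot E\cdot K+p_l^*(A_l)\cdot E\cdot K'$, the second summand vanishes by the projection formula because $E$ is $p_l$-exceptional, and the first is non-negative because $p_l^*(A_l)$ and $K$ are nef and $E$ is effective. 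With this repair, your treatment of $(b)$ becomes a slightly cleaner packaging of the paper's computation, which instead expands $\bigl(p_r^*(-K_{X_r})-G_r\bigr)^2$, kills the cross term by the projection formula, and bounds $p_r^*A_r\cdot G_r^2\leq 0$ by restricting to a general surface in $|p_r^*(mA_r)|$ and citing \cite[Corollary 2.7]{B}.
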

\begin{proof} Write $-K_X-a_lA_l\equiv a_rA_r$. Since $\kappa(X,-K_X)=3$ but $\kappa(X,a_rA_r)<3$, we must have $a_l>0$. Similarly, we get $a_r>0$. Push forward this relation to $X_l$ and intersecting with a fibre curve $C_l$ of $f_l:X_l\ra Z_l$, we get $a_r=\frac{-K_X\cdot C_l}{A_r\cdot C_l}\in\bQ$. As $A_r$ is integral and $C_l$ moves in a covering family, we have $A_r\cdot C_l\in\bZ_{>0}$ and hence $(a)$. 
	
Since for $\star\in\{l,r\}$ the divisors $A_\star$ being the pullback of an ample divisor from $Z_\star$ is nef on $X_\star$ and $-K_X$ is nef, by Lemma \ref{neg} we can write, 
$$\begin{cases} (p_\star)^*A_\star=h^*A_\star-E_\star&,\ {\rm where}\ E_\star\geq0\ {\rm is\ exceptional};\\ 
 h^*(-K_X)=(p_\star)^*(-K_{X_\star})-G_\star&,\ {\rm where}\ G_\star\geq0\ {\rm is\ exceptional.}
	\end{cases}$$
	
From $-K_X\equiv a_lA_l+a_rA_r$, we have 
	\begin{align*} h^*(-K_X)\equiv a_lh^*A_l+a_rh^*A_r=a_lh^*A_l+a_rp_r^*A_r+a_rE_r ,
	\end{align*}
and hence 
	\begin{align*} -K_X^3&=h^*(-K_X)^3\\
	&=a_lh^*A_l\cdot h^*(-K_X)^2+a_rp_r^*A_r\cdot h^*(-K_X)^2+a_rE_r\cdot h^*(-K_X)^2 \\
	&=a_lh^*A_l\cdot h^*(-K_X)^2+a_rp_r^*A_r\cdot (p_r^*(-K_{X_r})-G_r)^2 \\
	&=a_lh^*A_l\cdot h^*(-K_X)^2+a_r(K_{X_r}^2\cdot A_r-2p_r^*A_r\cdot p_r^*(-K_{X_r})\cdot G_r+p_r^*A_r\cdot G_r^2) \\
	&\leq a_lh^*A_l\cdot p^*(-K_X)^2+a_rK_{X_r}^2\cdot A_r\\
	&\leq a_lK_{X_l}^2\cdot A_l+a_rK_{X_r}^2\cdot A_r
	\end{align*} 
where we have used $E_r\cdot h^*(-K_X)^2=0=p_r^*A_r\cdot p_r^*(-K_{X_r})\cdot G_r$, and $p_r^*A_r\cdot G_r^2=(G_r|_{p_r^*A_r})^2\leq0$. Indeed, take $S_r\in |p_r^*(mA_r)|$ for $m\gg0$ a general smooth surface of this base point free linear system, then $G_r|_{S_r}$ is an exceptional curve for the birational morphism $p_r|_{S_r}$ and hence $m^2p_r^*A_r\cdot G_r^2=(G_r|_{S_r})^2\leq0$ by \cite[Corollary 2.7]{B}. Similar reasoning applies to $h^*A_l\cdot h^*(-K_X)^2$ and hence the last inequality. This completes the proof of $(b)$.
\end{proof}

Let $Z$ be one of $Z_l$ and $Z_r$, and let $K, H$ and $F$ be the corresponding divisors on $X_\star$. 
\begin{lemma}\label{Z2} If $\dim Z=1$, then $K^2\cdot F=K_F^2\leq 9$. If $\dim Z=2$, then one can choose $H$ on $Z$ so that $K^2\cdot H\leq 12.$
\end{lemma}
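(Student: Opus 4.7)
The plan is to treat the two cases separately, using adjunction in the fibration case and the discriminant identity of Proposition~\ref{disc} in the conic bundle case.

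For $\dim Z=1$, the map $f_\star:X_\star\to\bP^1$ is a del Pezzo fibration, so a general fibre $F$ is a smooth del Pezzo surface by generic smoothness in characteristic zero together with the isolatedness of the terminal singularities of $X_\star$. Since $F$ is numerically equivalent to $f_\star^*[\mathrm{pt}]$, one has $F^2=0$, so adjunction gives $K_F=(K_{X_\star}+F)|_F=K_{X_\star}|_F$. Hence $K^2\cdot F=K_F^2\leq 9$, with equality only when $F\cong\bP^2$, by the standard bound for smooth del Pezzo surfaces.

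For $\dim Z=2$, the map $f_\star$ is a conic bundle. Since $\rho(X_\star/Z)=1$ and $\rho(X_\star)=2$, we have $\rho(Z)=1$; by Theorem~\ref{conic} and Proposition~\ref{disc}, $Z$ is a $\bQ$-factorial weak del Pezzo surface with only Du Val singularities of type $A$, and there is a numerical relation $-4K_Z\equiv(f_\star)_*K_{X_\star}^2+\Delta$ with $\Delta\geq 0$. For any ample Cartier divisor $L$ on $Z$, setting $H=f_\star^*L$ and applying the projection formula,
\[
K^2\cdot H=(f_\star)_*K_{X_\star}^2\cdot L=-4K_Z\cdot L-\Delta\cdot L\leq-4K_Z\cdot L.
\]
My plan is to take $L=L_0$, the primitive ample Cartier divisor on $Z$, and verify $-4K_Z\cdot L_0-\Delta\cdot L_0\leq 12$.

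The case $Z\cong\bP^2$ is immediate: with $L_0=\cO(1)$ we have $-4K_Z\cdot L_0=12$ and $\Delta\cdot L_0\geq 0$, giving the bound with equality exactly when the conic bundle has no degenerate fibre in codimension one. The main obstacle is the singular case, where the smallest ample Cartier can pair with $-K_Z$ to a number exceeding $3$: for $Z=\bP(1,1,2)$ the smallest ample Cartier is $L_0=\cO(2)$ with $-4K_Z\cdot L_0=16$, forcing one to prove $\Delta\cdot L_0\geq 4$, while for a potential base $Z=\bP(1,2,3)$ one has $L_0=-K_Z$ with $-4K_Z\cdot L_0=24$, requiring $\Delta\cdot L_0\geq 12$. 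To extract the needed lower bound on $\Delta$, I would exploit that $X_\star$ has only isolated terminal singularities by passing to the minimal (crepant) resolution $g:\tilde Z\to Z$ and analysing the base change of the conic bundle over the chains of $(-2)$-curves supporting the Du Val locus: terminality of $X_\star$ should prevent the fibres there from being generically smooth, so the strict transform of $\Delta$ must meet each such chain sufficiently positively. A complementary tactic is to classify directly the very short list of Picard rank one Du Val weak del Pezzo surfaces admitting a terminal conic bundle structure and to verify the bound case by case. The hardest step is making this lower bound on $\Delta\cdot L_0$ sharp enough to yield $12$.
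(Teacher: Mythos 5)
Your $\dim Z=1$ case is fine and is exactly what the paper leaves as ``clear.'' The gap is in the $\dim Z=2$ case, and you have correctly diagnosed it yourself: once you fix $H$ to be the pullback of the \emph{primitive ample Cartier} generator $L_0$, the term $-4K_Z\cdot L_0$ can be $16$ or $24$ on singular bases, and you are left needing a lower bound on $\Delta\cdot L_0$ that you do not prove (and which would require either a delicate local analysis over the Du Val locus or a classification of admissible bases). This is not how the lemma is meant to be read: the phrase ``one can choose $H$'' is there precisely so that you are free to pick a divisor adapted to the estimate, not the Cartier generator.

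The paper's choice is $H=f^*C$ with $C$ an \emph{extremal rational curve of minimal length} on the del Pezzo surface $Z$, so that $-K_Z\cdot C\leq 3$; since $\rho(Z)=1$ such a $C$ is still ample as a ($\bQ$-Cartier, integral Weil) divisor, which is all that is used later in Lemma \ref{key}. With this choice the identity $f_*K_X^2=-4K_Z-\Delta$ closes the argument in two lines: if $C\nsubseteq\Supp(\Delta)$ then $\Delta\cdot C\geq0$ and $K^2\cdot H\leq 4(-K_Z\cdot C)\leq 12$; if $C\subseteq\Supp(\Delta)$ one writes $\Delta\cdot C=C^2+(\Delta-C)\cdot C$ and uses the inequality $-(K_Z+C)\cdot C\leq 2$, already established in the proof of Proposition \ref{disc} via the crepant minimal resolution, to get $K^2\cdot H=3(-K_Z\cdot C)-(K_Z+C)\cdot C-(\Delta-C)\cdot C\leq 9+2=11$. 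No positivity of $\Delta$ and no case-by-case study of the base (e.g.\ $\bP(1,1,2)$ or $\bP(1,2,3)$, on which the extremal curve has anticanonical degree $2$ resp.\ $1$, far below the Cartier generator) is needed. To repair your write-up, replace $L_0$ by such a curve $C$ and import the inequality $-(K_Z+C)\cdot C\leq 2$ from Proposition \ref{disc}.
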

\begin{proof} The result is clear when $\dim Z=1$. We assume $\dim Z=2$ and that $H=f^*C$ for some curve $C$ on $Z$. Then from Proposition \ref{disc} and its proof, we have $f_*K_X^2=-4K_Z-\Delta$ for a reduced divisor $\Delta$ and $-(K_Z+C)\cdot C\leq 2$. Since $Z$ is del Pezzo, we can choose $C$ to be an extremal curve of minimal length $-K_Z\cdot C\leq3$. There are two cases to consider. If $C\nsubseteq\Supp(\Delta)$, then 
	\begin{align*}K_X^2\cdot H=f_*K_X^2\cdot C=4(-K_Z\cdot C)-\Delta\cdot C \leq 4(-K_Z\cdot C)\leq 12.
	\end{align*}
	If $C\subseteq\Supp(\Delta)$, then 
	\begin{align*}K_X^2\cdot H=f_*K_X^2\cdot C=3(-K_Z\cdot C)-(K_Z+C)\cdot C-C\cdot (\Delta-C)\leq 11. 
	\end{align*}
	\end{proof}
	
\begin{proof}(of Main Theorem in Case ${\bf (I)}$) From Lemma \ref{key} $(a)$ and length of extremal ray on del Pezzo surfaces, we have 
	$$ 0<a_r\leq -K_{X_l}\cdot C_l\leq\begin{cases} 3,&\ {\rm if}\ \dim Z_l=1 \\ 2,&\ {\rm if}\ \dim Z_l=2\end{cases},
	$$
and similarly for $a_l\in\bQ_{>0}$. Hence from Lemma \ref{key} $(b)$, we have an upper bound of the  anticanonical volume, 
\begin{align*} -K_X^3&\leq a_lK_{X_l}^2\cdot A_l+a_rK_{X_r}^2\cdot A_r\\ 
&\leq \begin{cases} 3\cdot 9+3\cdot 9=54&,\ {\rm if}\ \dim Z_l=\dim Z_r=1\\
3\cdot 12+2\cdot 9=54&,\ {\rm if}\ \dim Z_l\neq\dim Z_r\\
2\cdot 12+2\cdot 12=48&,\ {\rm if}\ \dim Z_l=\dim Z_r=2\\
\end{cases}.
\end{align*}
\end{proof}

\section{Proof of Main Theorem: Case ${\bf (II)}$}\label{II} 
Recall that there is a diagram, 
\begin{center}
\begin{tikzcd}
			&&& W\arrow[dlll,"p_l", swap]\arrow[d,"h"] & \\
			X_l\arrow[d,"f_l"] &&& X\arrow[dll,swap,"\varphi_l"]\arrow[lll,dashed,swap,"\chi_l"]\arrow[r,"\varphi_r"]   &Y_r\\
			Z_l   &Y_l&&E\arrow[u,hook]\arrow[r]&\varphi_r(E)\arrow[u,hook].
		\end{tikzcd} 
		\end{center}
The following lemma is crucial to our main result.  
\begin{lemma}\label{adj} Suppose that $\dim \varphi_r(E)=0$. Then there is a curve $\Gamma$ on  $E$ moving in a one-dimensional family such that $0<-E\cdot \Gamma\leq3.$
\end{lemma}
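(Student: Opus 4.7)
The plan is to recognize $E$ as a (weak) del Pezzo-type surface via adjunction combined with the $K$-triviality of $\varphi_r$, and then extract the desired curve $\Gamma$ using the cone theorem on normal projective surfaces.

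First I would observe that since $\varphi_r \colon X \to Y_r$ is an extremal divisorial contraction with $\rho(X/Y_r)=1$ contracting $E$ to the point $\varphi_r(E)$, the divisor $-E$ is $\varphi_r$-ample, so its restriction $-E|_E$ is ample on $E$. Since $\varphi_r$ is $K$-trivial and $\dim \varphi_r(E)=0$, every curve on $E$ is contracted by $\varphi_r$, so $K_X|_E$ is numerically trivial on $E$. Adjunction (with a different $\mathrm{Diff}_E \geq 0$ to absorb the singularities of the pair $(X,E)$) then gives
\begin{equation*}
K_E + \mathrm{Diff}_E \equiv (K_X+E)|_E \equiv E|_E,
\end{equation*}
so $-K_E \equiv -E|_E + \mathrm{Diff}_E$ is big; thus $E$ is a (possibly singular) weak log del Pezzo surface.

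Next I would apply the cone theorem on the surface $E$: any $K_E$-negative extremal ray is spanned by a rational curve $\Gamma$ with $0 < -K_E \cdot \Gamma \leq 3$. I would select $\Gamma$ from a covering family, which exists either because $E$ admits a fiber-type $K_E$-negative extremal contraction (whose fibers form a covering family) or, when $\rho(E)=1$, because the unique extremal class on such a Fano-type surface moves through every point. Choosing $\Gamma$ general in its family so that $\Gamma \not\subset \mathrm{Supp}(\mathrm{Diff}_E)$, the inequality $\mathrm{Diff}_E \cdot \Gamma \geq 0$ yields
\begin{equation*}
0 < -E \cdot \Gamma = -K_E \cdot \Gamma - \mathrm{Diff}_E \cdot \Gamma \leq -K_E \cdot \Gamma \leq 3,
\end{equation*}
as required.

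The main obstacle is verifying that $E$ is well-behaved enough for the surface cone theorem to apply cleanly, namely that $E$ is normal with at worst log terminal singularities. I would deduce this from the classification of $K$-trivial (crepant) divisorial contractions from terminal $\bQ$-factorial threefolds to a point (Koll\'ar--Mori, Kawakita, and others), together with the fact that $\mathrm{Diff}_E$ is effective. A related subtlety is ensuring that $\Gamma$ actually deforms in a positive-dimensional family rather than being rigid (for instance, a $(-n)$-curve contracted by a divisorial $K_E$-negative ray on $E$); this can be arranged by choosing a fiber-type extremal ray when $\rho(E) \geq 2$, or by first running the MMP on $E$ to reach a model where a covering family of rational curves of bounded $-K$-degree is visible and then pulling back.
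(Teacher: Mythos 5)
Your overall strategy---adjunction for $E\subset X$ together with the $K$-triviality of $\varphi_r$ to see $E$ as a surface of del Pezzo type, then a covering family of rational curves of $-K$-degree at most $3$---is the same as the paper's. But there is a genuine gap at exactly the point you flag as the ``main obstacle'': you need $E$ to be normal with log terminal singularities (and, implicitly, the pair $(E,\mathrm{Diff}_E)$ to be at worst log canonical so that $\mathrm{Diff}_E\geq 0$ has controlled coefficients and the surface cone theorem applies to $K_E$), and you propose to extract this from a classification of $K$-trivial divisorial contractions to a point. No such classification is available in the generality needed: Kawakita-type classifications concern $K$-\emph{negative} (terminal) divisorial contractions, whereas here $\varphi_r$ is crepant over a canonical, non-terminal point of $Y_r$, and the paper explicitly emphasizes (Remark 5.4 and the introduction) that the singularities of $(X,E)$ in this situation are not well understood. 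In particular $E$ may fail to be normal, and the different may have coefficients greater than one, so $(E,\mathrm{Diff}_E)$ need not even be log canonical; your appeal to the cone theorem on $E$ and to $\mathrm{Diff}_E\cdot\Gamma\geq 0$ for a general member of a covering family is therefore not justified as stated. The same unresolved issue infects your fix for rigidity of the extremal curve.

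The paper circumvents precisely these problems: it applies Koll\'ar's subadjunction on the \emph{normalization} $E^{\nu}$ (allowing $\mathrm{Diff}(0)$ to have large coefficients), passes to the minimal resolution $S$ of $E^{\nu}$ where $K_S+\Delta_S=h^*(K_{E^{\nu}}+\Delta)$ with $\Delta_S\geq 0$, uses rational chain connectedness of $E$ (from \cite{HM:RCC}, since $E$ is a crepant divisor over a canonical point) to run $S$ down to $\bP^2$ or a ruled surface, and there exhibits the moving curve explicitly as a line or a ruling avoiding the exceptional locus; the bound $\leq 3$ and the movability then come for free and push forward to $E$. If you replace your direct use of the cone theorem on $E$ by this normalization-and-resolution argument, your proof becomes essentially the paper's.
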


\begin{proof} From \cite[Proposition 4.5]{Kol}, there is a subadjunction formula,
	\begin{align*}(K_X+E)|_{E^{\nu}}=K_{E^\nu}+{\rm Diff(0)},
	\end{align*}
where $\Delta={\rm Diff(0)}$ is effective but may have coefficients bigger than one. Let $h:S\ra E^\nu$ be the minimal resolution of $(E^\nu,\Delta)$, which always exists for surfaces, then we have $K_S+\Delta_S=h^*(K_{E^\nu}+\Delta)$ for some $\Delta_S\geq0$. It is enough to find a curve $\Gamma_S$ on $S$ moving in a one-dimensional family with $-(K_S+\Delta_S)\cdot \Gamma_S\leq3$ for if $\Gamma=h_*\Gamma_S$, then 
$$-E\cdot \Gamma=-(K_X+E)\cdot h_*\Gamma_S=-(K_S+\Delta_S)\cdot \Gamma_S\leq3.$$	 

We first note that being an exceptional divisor over a canonical singularity with a zero-dimensional center, $E$ is rationally chain connected by \cite{HM:RCC} and hence so is $S$. Let $D$ be a $(-1)$-curve on $S$ and $\mu:S\ra S'$ be the corresponding contraction. If $D$ maps to a point on $E^\nu$, then $S\ra E^\nu$ factors through $S'$ and $(K_X+E)|_{S'}=K_{S'}+\mu_*\Delta_{S}$. Hence after replacing by $S'$, we may assume that $S\ra E^\nu$ contracts no $(-1)$-curves of $S$. 

If $S$ contains no $(-1)$-curve, then $S=\bP^2$ or $S=\bF_n$ for some $n\neq1$\footnote{or $S$ has a $\bP^1$-bundle structure $S\rightarrow C$. In the proof below, replace $\bF_n\ra\bP^1$ by $S\ra C$.}. 
In the former case, $-(K_S+\Delta_S)\cdot \Gamma_S\leq 3$ by choosing $\Gamma_S$ to be a line in $\bP^2$. 
In the latter case, let $\Gamma_S$ be a general fiber of $\bF_n\ra \bP^1$. Then 
$-(K_S+\Delta_S)\cdot \Gamma_S\leq -K_S\cdot \Gamma_S=2$ 
and $\Gamma_S$ does not map to a point on $E^\nu$ as it moves in a covering family.  

Suppose now there is a $(-1)$-curve $C$ on $S$. Inductively blowing down $(-1)$-curves, we get $\tilde{\mu}:S\ra S_{\rm min}$ with $S_{\rm min}=\bP^2$ or $S_{\rm min}=\bF_n$ for some $n\neq1$, and we may write 
$$-(K_S+\Delta_S)=-\tilde{\mu}^*(K_{S_{\rm min}}+\Delta_{S_{\rm min}})+G,$$ 
where $G$ is exceptional over $S_{\rm min}$. If $S_{\rm min}=\bP^2$, let $D$ be a line avoiding $\tilde{\mu}(G)$ and $\Gamma_S$ its proper transform on $S$, then 
$$-(K_S+\Delta_S)\cdot \Gamma_S=-(K_{S_{\rm min}}+\Delta_{S_{\rm min}})\cdot D\leq3.$$
If $S_{\rm min}=\bF_n$, let $D$ be a general fiber of $\bF_n\ra\bP^1$ avoiding $\tilde{\mu}(G)$ and $\Gamma_S$ its proper transform on $S$, then 
$$-(K_S+\Delta_S)\cdot \Gamma_S=-(K_{S_{\rm min}}+\Delta_{S_{\rm min}})\cdot D\leq2.$$
\end{proof}

The next lemma is the counterpart of Lemma \ref{key} in the proof of Case ${\bf (I)}$.
\begin{lemma}\label{gen} If $\dim Z_l=1$, then the divisors $F_X$ and $E$ are independent in $N^1(X)_\bR$. Moreover, we can write $-K_X\equiv aF_X+bE$ with $a,b\in\bQ_{>0}$ and $0< b\leq 3$. 
Similarly, when $\dim Z_l=2$ we can write $-K_X\equiv aH_X+bE$ with $a,b\in\bQ_{>0}$ and $0< b\leq 2$. In either cases, we have 
$$\begin{cases}0<a\leq2b,&\ {\rm if}\ \dim\varphi_r(E)=1\\ 0<a\leq 3b,&\ {\rm if}\ \dim\varphi_r(E)=0\end{cases}.$$ 
\end{lemma}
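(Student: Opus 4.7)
The plan is to write $-K_X \equiv aA + bE$, where $A \in \{F_X, H_X\}$ according as $\dim Z_l = 1$ or $2$, and then to bound $a$ and $b$ separately by intersecting with carefully chosen test curves, in the spirit of Lemma \ref{key}. For linear independence of $A$ and $E$ in $N^1(X)_\bR$: since $X$ is a Mori dream space of Picard rank two by \cite{BCHM}, the pseudo-effective cone $\overline{\mathrm{Eff}}(X)$ is a two-dimensional polyhedral cone. The exceptional $E$ is rigid and generates one extremal ray, while $A$ is the $\chi_l$-proper transform of a semi-ample class on $X_l$ and hence moves in a covering family, so is not proportional to $E$. The decomposition $-K_X \equiv aA + bE$ with $a,b \in \bQ$ is therefore well-defined.

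To bound $b$, I would push the relation forward along $\chi_l$ to $X_l$ (intersection numbers with a generic fibre curve of $f_l$ are unaffected by the small modification, since such a curve avoids the flipping locus), yielding
$$bE_l \cdot C_l = -K_{X_l} \cdot C_l,$$
because $A_l \cdot C_l = 0$: when $\dim Z_l = 1$ the divisor $A_l = F_l$ is a disjoint fibre, and when $\dim Z_l = 2$ the divisor $A_l = H_l$ is pulled back from $Z_l$ while $C_l$ is contracted by $f_l$. Genericity of $C_l$ keeps it away from $\Sing(X_l)$ and out of the prime divisor $E_l$ (else $E_l$ would contain a covering family and hence equal $X_l$), so $E_l \cdot C_l \in \bZ_{>0}$. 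Combined with $-K_{X_l} \cdot C_l \leq 3$ if $\dim Z_l = 1$ and $\leq 2$ if $\dim Z_l = 2$, obtained from the length of the extremal ray as in Lemma \ref{key}, this gives the stated positivity and upper bound on $b$.

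To bound $a$, I would intersect with a curve $\Gamma$ contracted by $\varphi_r$. When $\dim \varphi_r(E) = 0$, take $\Gamma$ as in Lemma \ref{adj}: it moves in a covering family of $E$ and satisfies $0 < -E \cdot \Gamma \leq 3$. When $\dim \varphi_r(E) = 1$, take $\Gamma$ a general fibre of the ruled map $\varphi_r|_E\colon E \to \varphi_r(E)$, which is a $\bP^1$; the subadjunction formula $(K_X + E)|_{E^{\nu}} = K_{E^{\nu}} + \mathrm{Diff}(0)$, together with $K_X \cdot \Gamma = 0$ and a generic choice of $\Gamma$ (so that $\Gamma$ avoids the support of $\mathrm{Diff}(0)$), yields $-E \cdot \Gamma = -K_{E^{\nu}} \cdot \Gamma = 2$. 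Since $\varphi_r$ is $K$-trivial, $-K_X \cdot \Gamma = 0$, so
$$aA \cdot \Gamma = b(-E \cdot \Gamma) > 0.$$
A generic $\Gamma$ avoids $\Sing(X)$ and is not contained in the prime divisor $A$ (since $[A]$ and $[E]$ are independent), so $A \cdot \Gamma \in \bZ_{>0}$. Division yields $a \leq 3b$ when $\dim \varphi_r(E) = 0$ and $a \leq 2b$ when $\dim \varphi_r(E) = 1$, together with $a > 0$.

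The principal technical obstacle is guaranteeing the integrality of $A \cdot \Gamma$ and $E_l \cdot C_l$ despite non-Gorenstein $\bQ$-factorial singularities, together with the compatibility of intersection numbers under the small birational map $\chi_l$. Both issues reduce to choosing the test curves generically in their covering families, so they avoid the (finitely many) flipping curves of $\chi_l$ as well as $\Sing(X)$ and $\Sing(X_l)$; on the complementary open locus the relevant Weil divisors are Cartier, intersection numbers lie in $\bZ$, and the numerical equivalence descends through $\chi_l$ exactly as in the proof of Lemma \ref{key}.
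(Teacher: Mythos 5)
Your overall route coincides with the paper's: decompose $-K_X\equiv aA+bE$ with $A\in\{F_X,H_X\}$, bound $b$ by pushing forward to $X_l$ and intersecting with a fibre curve $C_l$, and bound $a$ by intersecting with a moving curve $\Gamma$ contracted by $\varphi_r$ (Lemma \ref{adj} when $\dim\varphi_r(E)=0$, the generic ruling of $E$ over $\varphi_r(E)$ when $\dim\varphi_r(E)=1$). The genuine gap sits exactly at the step you flag as the ``principal technical obstacle'': you claim $A\cdot\Gamma\in\bZ_{>0}$ because a generic $\Gamma$ avoids $\Sing(X)$ and is not contained in the prime divisor $A$. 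Non-containment in an effective $\bQ$-Cartier divisor only yields $A\cdot\Gamma\geq0$, since $\Gamma$ may be disjoint from $\Supp(A)$. This is not a formality here: when $\dim Z_l=1$, $A=F_X$ is the transform of a single general fibre of the del Pezzo fibration, and a priori every member of the covering family $\{\Gamma\}$ of $E$ could be contracted by the induced map $E\dra Z_l$, forcing $F_X\cdot\Gamma=0$. Since you derive $a>0$ from this very positivity, both $a>0$ and $a\leq(-E\cdot\Gamma)\,b$ are unsupported as written. The repair, which is what the paper does, is to prove $a>0$ first and independently: $bE\equiv-K_X-aA$ satisfies $\kappa(X,E)=0$, whereas if $a\leq0$ then $-K_X+(-a)A$ would be big (big plus nef); hence $a>0$. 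With $a>0$ in hand, $A\cdot\Gamma=b(-E\cdot\Gamma)/a>0$ is automatic from $b>0$ and $E\cdot\Gamma<0$, and integrality (your genericity argument, or the paper's computation on the common resolution $W$ via $h^*A=p_l^*A_l+E'$ with $E'\geq0$ exceptional) gives $A\cdot\Gamma\geq1$ and the stated bounds.

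Two smaller points. First, in the case $\dim\varphi_r(E)=1$ a general fibre of $\varphi_r|_E$ cannot avoid the horizontal components of ${\rm Diff}(0)$, so your subadjunction only gives $-E\cdot\Gamma=2-{\rm Diff}(0)\cdot\Gamma\leq2$ rather than equality; this inequality is all you need for $a\leq2b$, but you should also justify that the general fibre is an irreducible rational curve with $\Gamma^2=0$ on $E^\nu$ (the paper does this by observing that the generic transverse slice of $\varphi_r$ along $\varphi_r(E)$ is an $A_1$-point, since $E$ is the unique crepant exceptional divisor there). Second, your Mori-dream-space argument for independence of $A$ and $E$ is fine and essentially equivalent to the paper's comparison of Iitaka dimensions $\kappa(A)\geq1>0=\kappa(E)$.
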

\begin{proof} Since $\chi_l$ is small, on $X$ we have $\kappa(F_X)=1$ and hence $F_X$ can not be proportional to $E$ as $\kappa(E)=0$. Write $-K_X\equiv aF_X+bE$ for some $a,b\in\bR$. Since $-K_X$ is big but $bE\equiv -K_X-aF_X$ is not big, we must have $a>0$. Now push forward to $X_l$ and intersecting with a fibre curve $C_l$ of $f_l:X_l\ra Z_l$, we get 
$$0<b\leq bE\cdot C_l=aF_l\cdot C_l+bE\cdot C_l=-K_{X_l}\cdot C_l\leq \begin{cases}3 ,&\ {\rm if}\ \dim Z_l=1\\  2,&\ {\rm if}\ \dim Z_l=2\end{cases},$$
where $E\cdot C_l\geq1$ as $E$ is integral and $C_l$ moves in a covering family. This also shows that $b=\frac{-K_{X_l}\cdot C_l}{E\cdot C_l}\in\bQ_{>0}$. Since $aF_X\equiv -K_X-bE$, intersecting with an ample Cartier divisor shows that $a\in\bQ_{>0}$ as well. 

Let $\Gamma$ be the generating extremal curve of $\varphi_r$, then we have 
$$0=-K_X\cdot \Gamma=aF_X\cdot \Gamma+bE\cdot \Gamma.$$ 
If $\dim\varphi_r(E)=1$, then $Y_r$ has a one-dimensional center of canonical singularity along $C=\varphi_r(E)$. Since $X$ is smooth generically along $E$, $(X_{\eta_C},E_{\eta_C})$ is a resolution of a rational double point at the point $\eta_C\in Y_{\eta_C}$. This implies that $\eta_C\in Y_{\eta_C}$ is a $A_1$-singularity and hence 
\begin{align*}aF_X\cdot \Gamma=b(-E\cdot \Gamma)=-b(K_X+E)\cdot \Gamma=-bK_E\cdot \Gamma=2b.
\end{align*}
We claim that $F_X\cdot \Gamma\geq1$ and hence $a\leq 2b.$ 

For the proof of the claim, let $\Gamma$ be a moving curve as in Lemma \ref{adj} or a general fiber curve of \mbox{$\varphi_r|_E:E\rightarrow\varphi_r(E)$} when $\dim\varphi_r(E)=1$. Since $\Gamma$ moves in a one-dimensional family and $h(\Exc(W))$ has codimension two in $X$, by choosing $\Gamma$ not completely contained in $h(\Exc(W))$ we can find an integral curve $\Gamma_W\nsubseteq\Exc(W)$ on $W$ with $h_*\Gamma_W=\Gamma$. Say $\dim Z_l=1$. Note that $F_X=h_*p_l^*F$ as $\chi_l$ is small. Since the linear series $|F|$ is base point free so that $F$ is nef, it follows that $h^*h_*p_l^*F=p_l^*F+E'$ for an exceptional $\bQ$-divisor $E'\geq0$ and $$F_X\cdot\Gamma=h^*F_X\cdot\Gamma_W=(p_l^*F+E')\cdot\Gamma_W\geq F\cdot (p_l)_*\Gamma_W\geq1.$$
The same argument works when $\dim Z_l=2$ by replacing $F_X$ with $H_X$.

When $\dim\varphi_r(E)=0$, we apply Lemma \ref{adj} to get a curve $\Gamma$ on $X$ with
$$aF_X\cdot \Gamma=-b(K_X+E)\cdot \Gamma=-bE\cdot \Gamma\leq 3b.$$
This implies that $a\leq3b$ by the same argument as in the case of $\dim\varphi_r(E)=1.$ 
\end{proof}
We see that when $\dim\varphi_r(E)=0$, apply Lemma \ref{gen} and use $K_X\equiv\varphi_r^*K_{Y_r}$, we get 
$$a\leq aF_X\cdot \Gamma=b(-E\cdot \Gamma)\leq 3b\leq\begin{cases}9,&\ {\rm if}\ \dim Z_l=1\\  6, &\ {\rm if}\ \dim Z_l=2\end{cases}.$$

\begin{remark}\label{tech}	To estimate $a$ when $\dim\varphi_r(E)=0$ as in the last step of the above proof, one can try to argue in the following way. We get $-K_{Y_r}\equiv aF_{Y_r}$ by pushing forward to $Y_r$, where $F_{Y_r}:=(\varphi_r)_*F_l$ on $Y_r$. Note that $Y_r$ is Fano of Picard number one but with an isolated canonical singularity at $\varphi_r(E)$. Hence the result of \cite{Pr2} on terminal Fano threefold of Picard number one does not apply. Intersecting with a covering family of curves $\{C_t\}$ of the minimal length on the canonical Fano variety $Y_r$,  we get $aF_{Y_r}\cdot C_t\leq 4$ by \cite{CT} on the length of extremal curves of klt threefolds. However, we do not conclude that $a\leq 4$ as the divisor $F_{Y_r}$ is not necessarily Cartier, cf. Example \ref{ex}. The index of $K_{Y_r}$ at $P=\varphi_r(E)$ is at most $6$ by \cite{Kaw}. However, this bound is not sufficient for our purpose. 
\end{remark}

\begin{example}\label{ex} Consider $f:X=\bP_{\bP^2}(\cO\oplus\cO(3))\ra Z=\bP^2$. Then there is a section $S$ with $\cO_X(1)\sim S$, $S|_S=\cO_{\bP^2}(3)$, and $-K_X=2S$ is nef and big. There is a negative section $S'\cong Z$ with $S'|_{S'}=\cO_{\bP^2}(-3)$, which can be contracted by the anticanonical morphism to get $\varphi_{|-K_X|}:X\ra W\cong\bP(1,1,1,3).$ If $H=f^*\cO_{\bP^2}(1)$, then $-K_X=6H+2S'$ and $-K_W=6H_W$. Here $H_W$ is $\bQ$-Cartier but not Cartier. Note that we have 
	$$-K_X^3=8S^3=72\ {\rm or}\ -K_X^3=\underbrace{3\cdot 36H^2\cdot 2S'+3\cdot6H\cdot4S'^2}_{=0}+8S'^3=72.$$
	\end{example}

It is now easy to establish the anticanonical volume bound except one case.
\begin{proposition}\label{3} Suppose that $X$ is a weak $\bQ$-Fano threefold of Picard rank two and we are in Case {\bf (II)}. Then $-K_X^3\leq72$ unless $\dim Z_l=1$ and \mbox{$\dim\varphi_r(E)=0$}.
\end{proposition}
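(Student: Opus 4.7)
The plan is to intersect the numerical decomposition $-K_X \equiv a A_l + bE$ of Lemma \ref{gen} (with $A_l = F_X$ if $\dim Z_l = 1$ and $A_l = H_X$ if $\dim Z_l = 2$) against $(-K_X)^2$, exploiting the $K$-triviality of $\varphi_r$ to eliminate the mixed term with $E$.

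The first step is the vanishing $K_X^2 \cdot E = 0$. Since $\varphi_r$ is a $K$-trivial divisorial extremal contraction with $\rho(X/Y_r)=1$, the standard argument (the difference $K_X - \varphi_r^* K_{Y_r}$ is supported on $E$, and its intersection with a contracted curve $\Gamma$ is zero while $E\cdot\Gamma<0$) gives $K_X \equiv \varphi_r^* K_{Y_r}$ as $\bQ$-Cartier divisors. By the projection formula,
$$K_X^2 \cdot E \;=\; \varphi_r^*(K_{Y_r}^2) \cdot E \;=\; K_{Y_r}^2 \cdot \varphi_{r*} E \;=\; 0,$$
since $E$ is $\varphi_r$-exceptional. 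Hence
$$-K_X^3 \;=\; a\,(-K_X)^2 \cdot A_l \;+\; b\,(-K_X)^2 \cdot E \;=\; a\,(-K_X)^2 \cdot A_l.$$

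Next, to bound $(-K_X)^2 \cdot A_l$, the plan is to repeat the common-resolution argument from Lemma \ref{key}(b). On $W$ with $p_l : W \to X_l$ and $h : W \to X$, Lemma \ref{neg} applied to the nef divisor $-K_X$ and to the nef divisor $A_l$ on $X_l$ gives
$$h^*(-K_X) = p_l^*(-K_{X_l}) - G, \qquad h^*A_l = p_l^*A_l + E',$$
with $G, E' \geq 0$ exceptional over both $X$ and $X_l$. Since $E'$ is $h$-exceptional, $h^*(-K_X)^2 \cdot E' = 0$ by projection; since $G$ is $p_l$-exceptional, $p_l^*(-K_{X_l}) \cdot p_l^*A_l \cdot G = 0$; and $p_l^*A_l \cdot G^2 \leq 0$ by the Hodge-index style argument on a smooth member of $|m p_l^*A_l|$ for $m \gg 0$. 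Expanding $h^*(-K_X)^2 \cdot h^*A_l$ then yields
$$(-K_X)^2 \cdot A_l \;\leq\; K_{X_l}^2 \cdot A_l \;\leq\; \begin{cases} 9, & \dim Z_l = 1,\\ 12, & \dim Z_l = 2,\end{cases}$$
where the final estimates come from $K_F^2 \leq 9$ for the del Pezzo fibre and from Lemma \ref{Z2} (for a suitable choice of $H$) respectively.

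Finally, inserting the bounds of Lemma \ref{gen}, namely $a \leq 2b$ or $a \leq 3b$ according as $\dim\varphi_r(E) = 1$ or $0$, and $b \leq 3$ or $b \leq 2$ according as $\dim Z_l = 1$ or $2$, yields $-K_X^3 \leq aM$ bounded by $54,\ 81,\ 48,\ 72$ in the four cases $(\dim Z_l, \dim\varphi_r(E)) = (1,1),\,(1,0),\,(2,1),\,(2,0)$ respectively. All cases except $(1,0)$ give $-K_X^3 \leq 72$, proving the proposition. The main obstacle is precisely the excluded case $(1,0)$: Example \ref{ex} attains $-K_X^3 = 72$ on $\bP_{\bP^2}(\cO \oplus \cO(3))$, and on this example both $b \leq 3$ and $a \leq 3b$ are individually near-sharp, so one cannot hope to improve $81$ to $72$ by the above purely numerical route; a refinement demands a finer analysis of the canonical point $\varphi_r(E)\in Y_r$ (e.g.\ a sharper index estimate along the lines of Remark \ref{tech}), and this is deferred.
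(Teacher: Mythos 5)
Your proof is correct and follows essentially the same route as the paper: use $K_X\equiv\varphi_r^*K_{Y_r}$ to kill the $E$-term via $K_X^2\cdot E=0$, then transfer $K_X^2\cdot A_l$ to $K_{X_l}^2\cdot A_l$ on a common resolution using the negativity lemma and the non-positivity of the exceptional self-intersection on a general member of $|mp_l^*A_l|$, and finally combine with Lemmas \ref{gen} and \ref{Z2} to get the four bounds $54,\,81,\,48,\,72$. The only quibble is the closing aside: Example \ref{ex} realizes equality in the case $(\dim Z_l,\dim\varphi_r(E))=(2,0)$, not $(1,0)$, but this does not affect the proof of the proposition.
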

\begin{proof} We claim that 
$$-K_X^3\leq\begin{cases} aK_{X_l}^2\cdot F_l=aK_{F_l}^2\leq 9a,&\ {\rm if}\ \dim Z_l=1;\\  aK_{X_l}^2\cdot H_l\leq 12a, &\ {\rm if}\ \dim Z_l=2,\end{cases}$$
where the last inequality is Lemma \ref{Z2}. By Lemma \ref{gen}, this leads to   
$$-K_X^3\leq\begin{cases} 4\cdot12=48 ,&\ {\rm if}\ \dim Z_l=2,\ \dim\varphi_r(E)=1;\\ 6\cdot12=72, &\ {\rm if}\ \dim Z_l=2,\ \dim\varphi_r(E)=0;\\ 6\cdot9=54  &\ {\rm if}\ \dim Z_l=1,\ \dim\varphi_r(E)=1;\\  9\cdot9=81 ,&\ {\rm if}\ \dim Z_l=1,\ \dim\varphi_r(E)=0.\end{cases}$$ 
Hence $-K_X^3\leq 72$ unless $\dim Z_l=1$ and $\dim\varphi_r(E)=0$. 

To prove the claimed inequalities, suppose that $\dim Z_l=1$ and write $-K_X\equiv aF_X+bE$ as in Lemma \ref{gen}. From Lemma \ref{neg}, we have $h^*F_X-p_l^*F_l=E'\geq0$ and $h^*K_X-p_l^*K_{X_l}=E''\geq0$, where both $E', E''\geq0$ are exceptional over $X$ and $X_l$. Since $K_X=\varphi_r^*K_{Y_r}$ and $K_X^2\cdot E=0$,  we get  
\begin{align*} -K_X^3=&(-K_X)^2\cdot (aF_X+bE)\\ 
				=& aK_X^2\cdot F_X\\
				=&a(h^*K_X)^2\cdot (p_l^*F_l+E') \\
				=&a(p_l^*K_{X_l}+E'')^2\cdot p_l^*F_l \\	
				=&a(K_{X_l}^2\cdot F_l+2p_l^*K_{X_l}\cdot p_l^*F_l.E''+E''^2\cdot p_l^*F_l)\\
				\leq&aK_{X_l}^2\cdot F_l,
\end{align*}
where we have used projection formula $(h^*K_X)^2\cdot E'=0=p_l^*K_{X_l}\cdot p_l^*F_l\cdot E''$, and in the last inequality $E''^2\cdot p_l^*F_l=(E''|_{p_l^*F_l})^2\leq 0$.  Indeed, take $S_l\in |p_l^*(mF_l)|$ for $m\gg0$ a general smooth surface of this base point free linear system, then $E''|_{S_l}$ is an exceptional curve for the birational morphism $p_l|_{S_l}$ and hence $m^2p_l^*F_l\cdot E''^2=(E''|_{S_l})^2\leq0$ by \cite[Corollary 2.7]{B}. 
A similar computation shows that $-K_X^3\leq aK_{X_l}^2\cdot H_l$ when $\dim Z_l=2$.
\end{proof}

\section{Optimal examples}\label{opt}
Except when $\dim Z_l=1$ and $\dim\varphi_r(E)=0$ in Case ({\bf II}) as in Proposition \ref{3}, we finish the proof of Main Theorem by characterizing weak $\bQ$-Fano threefolds with optimal anticanonical volume 72.
\begin{proposition}\label{opt} Let $X$ be a weak $\bQ$-Fano threefold of $\rho(X)=2$. Assume that we are in Case ({\bf II}), and $\dim Z_l=2$ or \mbox{$\dim\varphi_r(E)=1$}. If \mbox{$-K_X^3=72$}, then 
$X\cong\bP_{\bP^2}(\cO_{\bP^2}\oplus\cO_{\bP^2}(3)).$
\end{proposition}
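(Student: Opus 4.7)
The plan is to trace equality through the chain of inequalities in the proof of Proposition~\ref{3} and use the resulting constraints to identify $X$ explicitly. Under the hypothesis $\dim Z_l=2$ or $\dim\varphi_r(E)=1$, Proposition~\ref{3} leaves three sub-cases with respective bounds $72$, $54$, and $48$; only the first, $\dim Z_l=2$ with $\dim\varphi_r(E)=0$, admits $-K_X^3=72$. So equality forces this configuration.

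Writing $-K_X\equiv aH_X+bE$ as in Lemma~\ref{gen}, each intermediate inequality must become an equality. This gives $b=2$, $a=6$, and $E\cdot C_l=1$ for a generic fiber $C_l$ of $f_l$, so $E$ is a section of $f_l$. From Lemma~\ref{Z2} we get $-K_{Z_l}\cdot C=3$ and $\Delta_{f_l}\cdot C=0$; since $C$ is ample and $\Delta_{f_l}\geq 0$, the discriminant divisor vanishes: $\Delta_{f_l}=0$. In Proposition~\ref{3}, equality forces the exceptional divisor $E''$ with $h^*K_X=p_l^*K_{X_l}+E''$ to vanish; since $\chi_l$ is a composition of $K_X$-negative small modifications (flips) in the two-ray game and each flip strictly increases discrepancies, $E''=0$ forces $\chi_l$ to be the identity, so $X=X_l$ with $f:=f_l:X\to Z:=Z_l$. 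Moreover, equality in the bound $-E\cdot\Gamma\leq 3$ from Lemma~\ref{adj} forces, after the reductions in that proof, $E\cong\bP^2$ with $\mathrm{Diff}(0)=0$ and $K_X|_E\equiv 0$.

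Combining $-K_X\equiv 6H+2E$ with $K_X|_E\equiv 0$ gives $E|_E\equiv -3H|_E$, i.e.\ $\cO_E(E)=\cO_{\bP^2}(-3)$. Computing $(6H+2E)^3$ using $H^3=0$, $H^2\cdot E=C^2$ (since $E$ is a section), $H\cdot E^2=-3C^2$ and $E^3=9C^2$ yields $-K_X^3=72\,C^2$, so $C^2=1$. Since $\rho(Z)=1$ makes $-K_Z\equiv\alpha C$ with $\alpha=3$, we deduce $(-K_Z)^2=9$. Passing to the minimal resolution $\tilde Z\to Z$ (crepant since the singularities of $Z$ are du Val of type $A$), we get a smooth weak del Pezzo surface with $K^2=9$; the only such surface is $\bP^2$, so $Z=\bP^2$ and $C$ is a line.

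Finally, $f:X\to\bP^2$ is a flat conic bundle (by miracle flatness, as $X$ is Cohen--Macaulay with one-dimensional fibers over a smooth surface) with $\Delta_f=0$ and a smooth section $E\cong\bP^2$ of normal bundle $\cO(-3)$; in particular $X$ is smooth in a neighborhood of $E$. Combined with $\Delta_f=0$, this should force $f$ to be a smooth $\bP^1$-bundle over $\bP^2$ outside a codim-$2$ locus; extending rank-$2$ vector bundles on $\bP^2$ across such loci then gives $X=\bP_{\bP^2}(\cE)$ for a rank-$2$ bundle $\cE$. The section $E$ corresponds to a surjection $\cE\twoheadrightarrow\cL$ with $\deg\cL=3$, and since any rank-$2$ bundle on $\bP^2$ with a line-bundle quotient splits, $\cE\cong\cO\oplus\cO(3)$ up to twist, whence $X\cong\bP_{\bP^2}(\cO\oplus\cO(3))$. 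The main obstacle is excluding possible isolated degenerate fibers of $f$ over codim-$2$ points of $\bP^2$ so as to promote the $\bP^1$-bundle structure globally; an alternative route is to apply Theorem~\ref{ub}(ii) to the Gorenstein canonical Fano anticanonical model $Y_r$, narrowing $Y_r$ to $\bP(1,1,1,3)$ or $\bP(1,1,4,6)$, and then use $\rho(X)=2$ together with the conic bundle structure to single out $\bP(1,1,1,3)$, whose unique crepant resolution is $\bP_{\bP^2}(\cO\oplus\cO(3))$.
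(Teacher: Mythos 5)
Your strategy is essentially the paper's own: trace equality backwards through Proposition \ref{3} to force $\dim Z_l=2$, $\dim\varphi_r(E)=0$, $a=6$, $b=2$, $E\cdot C_l=1$, $\Delta=0$ and $Z_l\cong\bP^2$ (your derivation of $C^2=1$ from $-K_X^3=72C^2$ is a pleasant variant of the paper's direct use of $K_{X_l}^2\cdot H_l=12$), and then rebuild $X$ as a projectivized rank-two bundle. The first genuine gap is the step ``equality forces $E''=0$, hence $\chi_l=\mathrm{id}$.'' Equality in the chain of Proposition \ref{3} only yields $E''^2\cdot p_l^*H_l=0$, i.e.\ $(E''|_{S_l})^2=0$ for a general $S_l\in|p_l^*(mH_l)|$, and by negative definiteness $E''|_{S_l}=0$; this says only that every component of $E''$ has center on $X_l$ mapping to finitely many points of $Z_l$, and does not by itself give $E''=0$ (a component could have center a point on a flipped curve). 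The implication can be repaired --- a flipped curve is $K_{X_l}$-positive, hence not contracted by the $K$-negative fibration $f_l$, so the divisor over its generic point, which occurs in $E''$ with strictly positive coefficient whenever $\chi_l\neq\mathrm{id}$, must meet $S_l$ --- but you do not supply this. The paper argues differently: it first proves $E_l\cong\bP^2$ from $E_l\cdot l=1$ and Zariski's main theorem, and then observes that a flipped curve would lie in $E_l$, so that $\NE(E_l)=\NE(\bP^2)$ being one-dimensional would force the small modification to have divisorial exceptional locus, a contradiction.

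The second gap is the one you flag yourself, and it is the substantive one: your construction identifies $X$ with $X':=\bP_{\bP^2}(\cO_{\bP^2}\oplus\cO_{\bP^2}(3))$ only over $U=\bP^2\setminus f(\Sing(X))$, i.e.\ in codimension one, and you do not exclude the possibility that $X$ and $X'$ differ over the finitely many points of $\bP^2\setminus U$. The paper closes exactly this point: both $X$ and $X'$ are weak Fano, hence Mori dream spaces by \cite{BCHM}, so $X$ is a small $\bQ$-factorial modification of $X'$, and $X'$ admits no nontrivial SQM (its two extremal contractions are the bundle projection and the divisorial contraction of the negative section), whence $X\cong X'$. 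Your proposed fallback via Theorem \ref{ub}$(ii)$ does not repair this, because that statement applies to \emph{Gorenstein} weak $\bQ$-Fano threefolds, and at that stage of the argument you have not shown that $X$ is Gorenstein (nor that $Y_r$ is its anticanonical model); invoking it is circular. Without the Mori dream space step (or some substitute), the proof establishes only a birational, not biregular, identification of $X$.
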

\begin{proof} By Proposition \ref{3}, $-K_X^3=72$ is only possible if $\dim Z_l=2$ and $\dim\varphi_r(E)=0$, and there is a diagram 
\begin{center}
	\begin{tikzcd} 
		X_l\arrow[d,"f_l"] && X\arrow[ll,dashed,swap,"\chi_l"]\arrow[r,"\varphi_r"]   &Y_r\\
		Z_l   &&E\arrow[u,hook]\arrow[r]&\varphi_r(E)=pt\arrow[u,hook].
	\end{tikzcd} 
\end{center}

As $\dim Z_l=2$, from Proposition \ref{3} we have $-K_X^3= aK_{X_l}^2\cdot H_l=6\cdot 12=72,$ i.e., $a=6$ and $K_{X_l}^2\cdot H_l=12$. Trace back all the computation, we must have $b=2$ and $E\cdot \Gamma=-3$. On the other hand, 
$K_{X_l}^2\cdot H_l=-(4K_{Z_l}+\Delta)\cdot C=12$ implies that $(Z_l,C)\cong(\bP^2,{\rm line})$ and $\Delta=0$.  

Since $2=-K_{X_l}\cdot l=(6H_l+2E_l)\cdot l=2E_l\cdot l$ for a general fiber $l$ of the conic bundle $f_l:X_l\ra Z_l$ and $E_l\cdot l\in\bZ$ as the intersection can be taken to be over the smooth part of $X_l$, we have $E_{X_l}\cdot l=1$. It follows that $E_{X_l}\ra \bP^2$ is a bijective projective morphism and hence $E_{X_l}\cong\bP^2$ by Zariski main theorem \cite{H}. We claim that $\chi_l$ is identity. Suppose not and let $\Gamma'$ be a flipping curve of the last flip $X'\dra X_l$, then 
$$0>-K_{X_l}\cdot\Gamma'=6H_l\cdot\Gamma'+2E_l\cdot\Gamma'\geq2E_l\cdot\Gamma',$$
and hence $\Gamma'\subseteq E_l$. But then the flip has codimension one exceptional locus as $\NE(E_l)=\NE(\bP^2)$ is one dimensional, which is absurd. Hence $X=X_l$ and it has a conic bundle structure $f:X\ra \bP^2$. This indeed is a $\bP^1$-bundle over $U:=\bP^2\backslash f(\Sing(X))$ as $X_U=f^{-1}(U)$ is now smooth and $\Delta=0$, cf. Proposition \ref{conic}. In particular, for $L=\bP^1$ a projective line in $U\subseteq\bP^2$, $H=f^*L$ is a $\bP^1$-bundle over $\bP^1$ and hence 
$$8=K_H^2=(K_X+H)|_H^2=(5H|_H+2E|_H)^2=20H^2\cdot E+4E^2\cdot H.$$
As $H^2\cdot E=l\cdot E=1$, we get $E^2\cdot H=E|_H^2=-3$, i.e., $E|_H$ is the unique negative section on $H$, and  $H\cong\bP_{\bP^1}(\cO_{\bP^1}\oplus\cO_{\bP^1}(3)).$\footnote{From this and $12=K_X^2\cdot H$, one can conclude $K_X^2\cdot E=K_X\cdot E^2=0\ {\rm and}\  E^3=9.$} 

We now finish the proof. The sheaf $V:=(f_*\cO_X(E))^{\vee\vee}$ is reflexive and hence locally free of rank two on $\bP^2$. The section $E$ over $\bP^2$ corresponds to a rank one free quotient of $V$, which indeed splits $V$ as the first cohomology of a line bundle vanishes on $\bP^2$. By restricting to $H$, we see that $\bP_{\bP^2}(V)$ can only be $X':=\bP(\cO_{\bP^2}\oplus\cO_{\bP^2}(3))$. On the other hand, by construction $X_U\cong\bP_U(V|_U)=\bP_{\bP^2}(V)|_U$. Hence $X'$ and $X$ are isomorphic in codimension one. Since $X'$ is a weak Fano variety and hence a Mori dream space by \cite{BCHM}, and $X$ is a small $\bQ$-factorial modification (SQM) of $X'$, we must have $X'\cong X$ as clearly, $X'$ has no nontrivial SQM. 
\end{proof}

The Main Theorem now follows from Section \ref{I}, Proposition \ref{3}, and Proposition \ref{opt}.

\begin{remark}[\cite{Isk}] The canonical Fano threefold $Y=\bP(1,1,4,6)$ satisfies $-K_Y^3=72$ and can be described as the image of the birational map $\varphi_{|-K_X|}$ of the weak Fano threefold $X:=\bP_{\bP^1}(\cO_{\bP^1}\oplus\cO_{\bP^1}(2)\oplus\cO_{\bP^1}(6))$, where $-K_X^3=54$. However, a terminalization of $Y$ must have Picard rank strictly bigger than two and can not be an example of optimal anticanonical volume in our main theorem. Indeed, a toric description of $Y$ can be given by the toric fan  $\<e_1,e_2,e_3,v=-4e_1-6e_2-e_3\>$ in $\bR^3$, for which blowing up at vectors $(-1,-1,0)$ and $(-2,-3,0)$ are crepant and a terminalization of $Y$ must have Picard rank bigger than two. 
\end{remark}

\bibliographystyle{plain}
\bibliography{vol}
\end{document}